\numberwithin{equation}{section}
\newtheorem{theorem}{Theorem}[section]
\newtheorem{lemma}[theorem]{Lemma}
\newtheorem{proposition}[theorem]{Proposition}
\newtheorem{corollary}[theorem]{Corollary}
\theoremstyle{definition}
\newtheorem{definition}[theorem]{Definition}
\def\NZQ{\mathbb}               % the font for N,Z,Q,R,C
\def\NN{{\NZQ N}}
\def\frk{\mathfrak}               % font for "Fraktur"
\def\Phi{{\frk N}}
\def\G{\Gamma}
\def\opn#1#2{\def#1{\operatorname{#2}}} % to make operators
\opn\chara{char} 
\opn\length{\ell} 
\opn\pd{pd} 
\opn\rk{rk}
\opn\projdim{proj\,dim} 
\opn\injdim{inj\,dim} 
\opn\rank{rank}
\opn\depth{depth} 
\opn\grade{grade} 
\opn\height{height}
\opn\embdim{emb\,dim} 
\opn\codim{codim}
\opn\Tr{Tr} 
\opn\bigrank{big\,rank}
\opn\superheight{superheight}
\opn\lcm{lcm}
\opn\trdeg{tr\,deg}%\emph{
\opn\reg{reg} 
\opn\lreg{lreg} 
\opn\ini{in} 
\opn\lpd{lpd}
\opn\size{size}
\opn\mult{mult}
\opn\dist{dist}
\opn\cone{cone}
\opn\lex{lex}
\opn\rev{rev}
\opn\div{div} \opn\Div{Div} \opn\cl{cl} \opn\Cl{Cl}
\opn\Spec{Spec} \opn\Supp{Supp} \opn\supp{supp} \opn\Sing{Sing}
\opn\Ass{Ass} \opn\Min{Min}
\opn\Ann{Ann} \opn\Rad{Rad} \opn\Soc{Soc}
\opn\Syz{Syz} \opn\Im{Im} \opn\Ker{Ker} \opn\Coker{Coker}
\opn\Am{Am} \opn\Hom{Hom} \opn\Tor{Tor} \opn\Ext{Ext}
\opn\End{End} \opn\Aut{Aut} \opn\id{id} \opn\ini{in}
\opn\nat{nat}
\opn\pff{pf}%   \pf exists already
\opn\Pf{Pf} \opn\GL{GL} \opn\SL{SL} \opn\mod{mod} \opn\ord{ord}
\opn\Gin{Gin}
\opn\Hilb{Hilb}\opn\adeg{adeg}\opn\std{std}\opn\ip{infpt}
\opn\Pol{Pol}
\opn\sat{sat}
\opn\Var{Var}
\opn\Gen{Gen}
\opn\Graph{Graph}
\opn\p{p}
\opn\d{d}
\opn\r{r}
\opn\m{m}
\opn\im{im}
\opn\SimpComp{SimpComp}
\opn\aff{aff} \opn\con{conv} \opn\relint{relint} \opn\st{st}
\opn\lk{lk} \opn\cn{cn} \opn\core{core} \opn\vol{vol}
\opn\link{link} \opn\star{star}
\opn\gr{gr}
\def\pot#1#2{#1[\kern-0.28ex[#2]\kern-0.28ex]}
\opn\dirlim{\underrightarrow{\lim}}
\opn\inivlim{\underleftarrow{\lim}}
\begin{document}
	
%%%%%%%%%%%%%%%%%%%%%%%%%%%%%%%%%%%%%%%%%%%%%%%%%%%%%%%%%%%%%%

\title{Tuples of homological invariants of edge ideals}
\thanks{\today}

\author[A. Kanno]{Akane Kanno}
\address{Department of Pure and Applied Mathematics, Graduate School
	of Information Science and Technology, Osaka University, Suita, Osaka
	565-0871, Japan}
\email{u825139b@ecs.osaka-u.ac.jp}

%		\keywords{edge ideal, dimension, depth, Castelnuovo-Mumford regularity, $h$-polynomials.
	%		}
%		\subjclass[2010]{13D02, 13D40, 05C70, 05E40}
% 13D02 : Syzygies, resolutions, complexes
% 13D40 : Hilbert-Samuel and Hilbert-Kunz functions; Poincar\UTF{00E9} series
% 05C69 : Dominating sets, independent sets, cliques
% 05C70 : Factorization, matching, partitioning, covering and packing
% 05E40 : Combinatorial aspects of commutative algebra 

\begin{abstract}
	
	Let $G$ be a graph and $I(G)$ its edge ideal. In this paper, we completely determine the tuples $(\dim R/I(G), \depth (R/I(G)), \reg (R/I(G)))$ when the number of vertices is fixed for any graphs $G$. 
\end{abstract}

\maketitle

%%%%%%%%%%%%%%%%%%%%%%%%%%%%%%%%%%%%%%%%%%%%%%%%%%%%%%%%%%%%%%%%
\section{introduction}

In this paper, graphs are always assumed to be finite, simple, undirected and connected unless otherwise noted. Let $R = K[x_1, \dots, x_n]$ denote a standard graded polynomial ring over a field $K$ and $G$ a graph on vertex set $[n] = \{1, \dots, n\}$ with edge set $E(G)$. The edge ideal of $G$ is the ideal of $R$ generated by monomials $x_{i}x_{j}$ where $\{i, j\} \in E(G)$. Denote the edge ideal as $I(G)$, and denote 
dimension, depth, regularity and $h$-polynomial of $R/I(G)$ as $\dim G, \depth G, \reg G$ and $h_G$, respectively. 

In recent years, a major trend in edge ideals has been the investigation of not only the relationship between invariants of graphs (e.g., matching numbers, induced matching numbers) and invariants of edge ideals but also the relationship among invariants of edge ideals.
The following inequalities are well known as one of the most basic relationship between themselves (\cite[Corollary B.4.1]{V2}).

\begin{displaymath}
	\deg h_G - \reg G
	\leq \dim G - \depth G
\end{displaymath}

In the above inequality, the equality holds if and only if the last Betti number $\beta_{p, p+r}(R/I(G))$ is nonvanishing where $p = \projdim(R/I(G))$, $r = \reg G$. In particular, if $R/I(G)$ is Cohen-Macaulay, then both sides are zeros and the equality holds. 
Based on this formula, the study has been done to determine the range that the above invariants can take when the number of vertices of the graph is fixed, i.e., to describe the sets defined as follows.

\begin{definition}
	
Given a positive integer $n$, we define the following three sets: 
	\begin{displaymath}
		\begin{aligned}
			\Graph_{\dim, \depth}(n) &= \left\{(d, p) \in \NN^2 \ \middle| \ \text{there is a graph }
			G\text{ such that }\dim G = d, \depth G = p \right\}, \\
			\Graph_{\dim, \depth, \reg}(n) &= \left\{(d, p, r) \in \NN^3 \ \middle| \ 
			\begin{array}{c}
				\mbox{\text{there is a graph }$G$\text{ such that }} \\
				\mbox{$\dim G = d, \depth G = p, \reg G = r$} \\ 
			\end{array}
			\right\}, \\ 
			\Graph_{\dim, \depth, \reg, \deg}(n) &= \left\{(d, p, r, g) \in \NN^4 \ \middle| \ \begin{array}{c}
				\mbox{\text{there is a graph }$G$\text{ such that }} \\
				\mbox{$\dim G = d, \depth G = p, \reg G = r, \deg h_G = g$} \\ 
			\end{array}\right\}. 
		\end{aligned}
	\end{displaymath}
\end{definition}

In \cite{HKMVT}, the following two partial results on this issue are obtained. 

\begin{theorem}\cite[Theorem $2.8$]{HKMVT}
	
	Define $C^{*}(n)$ as follows: 
	\[
	C^{*}(n) = \{(d, p) \in \NN^2 \ | \ 1 \le d \le p \le n - 1, d \le (n - d)(d - p + 1)\}. 
	\]
	Then $C^{*}(n) \subset \Graph_{\dim, \depth}(n)$ for any $n \ge 2$. 
\end{theorem}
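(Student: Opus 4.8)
The plan is to use that the first coordinate is purely combinatorial and then to engineer the depth by an explicit connected construction. Since the minimal primes of $I(G)$ are exactly the (minimal) vertex covers of $G$, we have $\height I(G)=\tau(G)$, so $\dim G = n-\tau(G)=\alpha(G)$, the independence number. Thus realizing a prescribed pair amounts to building a connected graph on $[n]$ with $\alpha(G)=d$ and $\depth G = p$. First I would split the vertices into a set $L$ of $d$ ``leaves'' that will form a maximum independent set and a set $C$ of the remaining $n-d$ ``cover'' vertices, and I would look for the graph among a single flexible family parametrized by how $L$ is attached to $C$.

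The family I propose is: let $C$ induce a clique, keep $L$ independent, and join each leaf to at least one cover vertex and each cover vertex to at least one leaf. Then $G$ is connected, $L$ is independent of size $d$, and since every cover vertex has a neighbour in $L$ no independent set beats $L$; one checks $\alpha(G)=d$, so $\dim G = d$ for \emph{every} admissible attachment. The only free data is the bipartite attachment between $C$ and $L$, which I will record by how the $d$ leaves are grouped and, in particular, by the largest number of leaves ``governed'' by a single cover vertex. The two extreme attachments are instructive sanity checks: the star $K_{1,d}$ (one cover vertex adjacent to all of $L$) should give $\depth G = 1$, while the fully whiskered clique and its relatives (each cover vertex governing essentially one leaf) are Cohen--Macaulay with $\depth G = \dim G = d$; in the sharing regime $n>2d$ this high-depth end is witnessed by $C_5$-type configurations rather than by whiskered cliques.

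The depth half is where the real work lies. I would compute $\depth G$ through the independence (Stanley--Reisner) complex $\Delta=\Delta(G)$, whose faces are the subsets of $L$ together with the sets $\{c\}\cup(L\setminus N(c))$ for $c\in C$, using Reisner's criterion and Hochster's formula; equivalently I would peel off one cover vertex (or one leaf-block) at a time via a short exact sequence and track depth with the standard depth lemma. The goal is to show that $\depth G$ falls below $\dim G$ exactly in proportion to how concentrated the neighbourhoods $N(c)$ are, so that $\depth G = p$ is attainable precisely when the $d$ leaves can be distributed over the $n-d$ cover vertices with no cover vertex governing more than $d-p+1$ of them. By a pigeonhole count such a distribution exists iff $d\le (n-d)(d-p+1)$, which is exactly the inequality defining $C^{*}(n)$; so the arithmetic condition is forced by, and is the precise feasibility threshold for, the depth construction, with connectivity (no isolated vertices) being what rules out the much larger disconnected range.

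The main obstacle, then, is making rigorous the claim ``$\depth G = p$ iff the heaviest leaf-block has size $d-p+1$,'' since depth is neither additive nor monotone under the relevant gluings. Concretely I expect the delicate points to be: (i) evaluating the reduced homology $\tilde H_{*}$ of the links of $\Delta(G)$ (or running the inductive short-exact-sequence argument) cleanly across both regimes $n\le 2d$ and $n>2d$, where the attachment switches from private leaf-blocks to shared leaves; (ii) confirming that every value of $p$ permitted by $d\le(n-d)(d-p+1)$ is actually hit, not just the endpoints; and (iii) checking throughout that the chosen attachment keeps $\alpha(G)=d$ and $G$ connected. The boundary instances---the star at $p=1$ and the Cohen--Macaulay configurations at $p=d$---serve as calibration that the threshold $d-p+1$ is matched to the inequality correctly.
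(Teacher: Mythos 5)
You have the right construction family and the right arithmetic, but the proof is not there: the entire content of the theorem is the assertion that your graphs have $\depth G=p$, and you explicitly defer that computation (``the main obstacle\dots is making rigorous the claim''). Everything up to that point is easy --- $\dim G=\alpha(G)$ is standard, $\alpha(G)=d$ for your clique-plus-independent-set graphs is a one-line check, and the pigeonhole equivalence between ``each cover vertex governs at most $d-p+1$ leaves'' and $d\le(n-d)(d-p+1)$ is immediate. What is missing is precisely the hard part: a proof that the depth of the constructed graph equals $d-q+1$ where $q$ is the largest leaf-block. In the partition regime $n\le 2d$ (whiskered cliques $K_{n-d}$ with $q_i\ge 1$ pendants on the $i$-th vertex) this is a known but nontrivial computation; in the sharing regime $n>2d$ your ``blocks'' no longer partition $L$, the formula as stated is not even well-posed without specifying the attachment exactly, and you give no argument that some admissible attachment achieves each permitted $p$. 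As written, the proposal is a plan, not a proof.

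For comparison: this paper does not prove the statement --- it is quoted from \cite{HKMVT} --- and the proof there (and the way this paper's own machinery is set up) avoids your obstacle. One realizes $(d,p)$ on at most $2d$ vertices by a whiskered clique with pendant multiplicities $q_1=d-p+1\ge q_2\ge\cdots\ge 1$ summing to $d$, where the depth formula is clean because the blocks are disjoint, and then grows the vertex count one at a time by $S$-suspension with $|S|=p-1$, which by Lemma \ref{susp} preserves both $\dim$ and $\depth$. That two-step route replaces your delicate ``shared leaves'' homology analysis with a single cited lemma. If you want to salvage your one-family approach, you must either prove the depth formula for overlapping attachments directly (Reisner/links, or the inductive short exact sequence $0\to R/(I:x)\to R/I\to R/(I,x)\to 0$ peeling off one cover vertex), or restrict your family to the partition case and import the suspension lemma for $n>2d$.
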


\begin{theorem}\label{cwdrdd}\cite[Theorem $4.4$]{HKMVT}
	
	Let $n \geq 5$ be an integer. Then 
	\begin{displaymath}
		\begin{aligned}
			{\rm Graph}^{CW}_{{\rm depth}, {\rm reg}, \dim, \deg} (n) 
			&= {\rm CW}_{2, {\rm reg}, \dim, \deg} (n) \\
			&\cup \left\{ (a, d, d, d) \in \mathbb{N}^{4} \  \middle| \
			%    3 \leq a \leq \left\lfloor\frac{n - 1}{2}\right\rfloor, \  
			%    \max \left\{ a, \frac{n-a}{2} + 1 \right\} \leq d
			%    \leq \left\lfloor\frac{n - 1}{2}\right\rfloor \right\} \\ 
			3 \leq a \leq d \leq \left\lfloor\frac{n - 1}{2}\right\rfloor, \  
			n < a + 2d \right\} \\ 
			&\cup \left\{ (a, a, d, d) \in \mathbb{N}^{4} \  \middle| \
			%    3 \leq a < \frac{n}{2},\  \max \{ a, n-2a \} < d \leq n - a \right\} \\
			3 \leq a < d \leq n - a,\   n \leq 2a + d - 1 \right\} \\
			&\cup \left\{(a,r,d,d) \in \mathbb{N}^{4} ~\left|~
			\begin{array}{c}
				\mbox{$3 \leq a < r < d < n - r$, } \\
				\mbox{$n + 2 \leq a + r + d$} \\ 
			\end{array}
			\right \}\right., 
		\end{aligned}
	\end{displaymath}
	where
	\begin{displaymath}
		\begin{aligned}
			&{\rm CW}_{2, {\rm reg}, \dim, \deg} (n) \\
			&= \left\{
			\begin{alignedat}{3}
				&\{(2, 2, n - 2, n - 2), (2, 2, n - 3, n - 3)\}, 
				&\quad &\text{if $n$ is even}, \\
				& \left\{(2, 2, n - 2, n - 2), (2, 2, n - 3, n - 3),
				\left(2, \frac{n-1}{2}, \frac{n-1}{2}, \frac{n-1}{2}\right) \right\}, 
				&\quad &\text{if $n$ is odd}, 
			\end{alignedat}
			\right. 
		\end{aligned}
	\end{displaymath}
	and 
	
	${\rm Graph}^{CW}_{{\rm depth}, {\rm reg}, \dim, \deg} (n)$ is restriction of $\Graph_{\dim, \depth, \reg, \deg}(n)$ to Cameron-Walker graphs. 
\end{theorem}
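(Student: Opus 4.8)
The plan is to fix the combinatorial description of Cameron--Walker graphs first, write each of the four invariants as an explicit function of the structural parameters, and then prove the two inclusions separately. By the Cameron--Walker classification (in the form that excludes stars and star triangles, which is forced here since every tuple in the target set has regularity at least $2$), a connected Cameron--Walker graph $G$ on $[n]$ arises from a connected bipartite graph with parts $V_1 = \{v_1,\dots,v_s\}$ and $V_2 = \{w_1,\dots,w_t\}$ by attaching $a_i \ge 1$ pendant edges to each $v_i$ and $b_j \ge 0$ pendant triangles to each $w_j$. I would record the vertex count $n = s + t + \sum_i a_i + 2\sum_j b_j$ and set $B = \sum_j b_j$ and $t' = \#\{ j : b_j \ge 1\}$; the whole theorem then reduces to understanding which triples of invariants these parameters can produce.

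Second, I would assemble the invariant formulas. For regularity I use the defining property of Cameron--Walker graphs together with the explicit induced matching consisting of one leaf edge at each $v_i$ and one edge from each pendant triangle, giving $\reg G = \nu(G) = s + B$. For dimension I compute a minimal vertex cover ($V_1$, together with each triangle-bearing $w_j$ and one outer vertex of each of its triangles), obtaining $\dim G = n - (s + t' + B)$. The key structural lemma is $\deg h_G = \dim G$ for every such $G$; equivalently $h_{\dim G} \neq 0$, which I would prove by showing that the reduced Euler characteristic of the independence complex $\mathrm{Ind}(G)$ is nonzero. A leaf/fold recursion (each leaf-bearing $v_i$ turns $\mathrm{Ind}(G)$ into a suspension of the independence complex with $N[v_i]$ deleted) lets me track the top homology and confirm it survives. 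The subtlest invariant is the depth: via Auslander--Buchsbaum, $\depth G = n - \projdim R/I(G)$, and I would derive $\projdim R/I(G)$ from the bouquet/domination structure of $G$, producing $\depth G$ as an explicit parameter function.

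With the formulas in place, the forward inclusion $\subseteq$ is a verification. Substituting the parameter expressions into $(\depth G, \reg G, \dim G, \deg h_G)$ and eliminating $n$ via the vertex count, I would check that the defining inequalities of each sub-family are satisfied, organizing the argument by whether $\depth G = 2$ (landing the tuple in $\mathrm{CW}_{2,\mathrm{reg},\dim,\deg}(n)$) or $\depth G \ge 3$, and then by the relative order of $\reg G$ and $\dim G$. Note that the introductory inequality $\deg h_G - \reg G \le \dim G - \depth G$, combined with the lemma $\deg h_G = \dim G$, already forces $\depth G \le \reg G$; this explains why every piece of the target set satisfies $\depth \le \reg \le \dim = \deg$.

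The backward inclusion $\supseteq$, the realization, is where I expect the main obstacle. A useful first observation is that $\dim G + \reg G = n - t'$, so for a prescribed tuple the number $t'$ of triangle-bearing vertices is forced to equal $n - d - r$; substituting this, each of the conditions $n < a + 2d$, $n \le 2a + d - 1$, and $n + 2 \le a + r + d$ becomes an upper bound on $t'$ in terms of the target depth $a$. The task is then, for each admissible $t'$ in range, to build a connected Cameron--Walker graph with exactly $t'$ triangle-bearing vertices, regularity $r$, dimension $d$, and depth $a$. Dimension and regularity respond transparently to adding leaves and triangles, so the crux is controlling the depth: I would fix $r$ and $d$ and then tune $\depth G$ between its Cohen--Macaulay value $d$ and its minimum by varying the bipartite base and the leaf distribution, verifying at each step that the graph stays Cameron--Walker and that the projective dimension moves as intended. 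Showing that the numerical conditions are not merely necessary but sufficient --- that no admissible tuple is missed --- is the heart of the argument, and the finitely many boundary cases collected in $\mathrm{CW}_{2,\mathrm{reg},\dim,\deg}(n)$, which depend on the parity of $n$, I would dispatch by explicit small constructions.
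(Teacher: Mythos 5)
This statement is not proved in the paper at all: it is quoted verbatim from \cite[Theorem 4.4]{HKMVT} as a known input to the main argument, so there is no in-paper proof to compare your attempt against. Judged on its own terms, your outline follows the strategy one would expect from the literature (the Cameron--Walker structural classification into a connected bipartite base $V_1\cup V_2$ with pendant leaves on $V_1$ and pendant triangles on $V_2$, explicit formulas for the invariants in those parameters, then a two-sided verification), and several of your computations are correct and useful: $\reg G = s + B$, the minimal vertex cover giving $\dim G = n - s - t' - B$, the identity $\dim G + \reg G = n - t'$ forcing $t' = n - d - r$, and the observation that $\deg h_G = \dim G$ together with $\deg h_G - \reg G \le \dim G - \depth G$ forces $\depth G \le \reg G$.

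However, as written this is a plan rather than a proof, and the two places you defer are exactly where the content of the theorem lives. First, you never produce the depth formula: you promise ``$\depth G$ as an explicit parameter function'' via Auslander--Buchsbaum and domination parameters, but later concede that depth must be ``tuned'' by varying the bipartite base and leaf distribution --- i.e., it is not a function of $(s,t,t',B)$ alone. Without a precise description of which depths a Cameron--Walker graph with given $(s,t',B,n)$ can attain, neither inclusion can be checked; the inequalities $n < a+2d$, $n \le 2a+d-1$, $n+2 \le a+r+d$ in the statement are precisely the answer to that question, and your argument gives no mechanism for deriving them. Second, the realization direction is only gestured at: asserting that every admissible tuple is hit requires explicit constructions (and a separate treatment of the parity-dependent boundary cases in ${\rm CW}_{2,\reg,\dim,\deg}(n)$), which you acknowledge as ``the heart of the argument'' but do not supply. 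The claim $\deg h_G = \dim G$ via nonvanishing of $\tilde\chi(\mathrm{Ind}(G))$ in top degree is also asserted rather than proved; the suspension/fold recursion you invoke needs to be carried out to confirm the top homology survives. So the approach is sound in outline, but the proposal does not yet establish the theorem.
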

%\begin{question}(Question $1.2$ \cite{HKU})\label{qqq}
%	Does the equality $C^*(n) = \Graph_{\dim, \depth}(n)$ hold? 
%\end{question}

Furthermore, the following results were obtained in \cite{HKU}. 

\begin{theorem}\cite[Corollary $1.4$]{HKU}
	
	The equality $C^*(n) = \Graph_{\dim, \depth}(n)$ holds if $n \leq 12$. 
\end{theorem}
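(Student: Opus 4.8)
The plan is to use the inclusion $C^*(n) \subseteq \Graph_{\dim, \depth}(n)$ from \cite[Theorem 2.8]{HKMVT}, so that the only thing left is the reverse inclusion $\Graph_{\dim, \depth}(n) \subseteq C^*(n)$ for $2 \le n \le 12$. Unwinding the definitions, this amounts to showing that for every connected graph $G$ on $n$ vertices the pair $(d, p) = (\dim G, \depth G)$ obeys the inequality $d \le (n - d)(d - p + 1)$ cutting out $C^*(n)$ (the order conditions $1 \le p \le d \le n - 1$ being automatic). I would first translate this into a statement about projective dimension: setting $c = \height I(G)$ one has $\dim G = n - c$ and, by the Auslander--Buchsbaum formula, $\depth G = n - \pd R/I(G)$, so a direct rearrangement shows the inequality is equivalent to the lower bound
\begin{displaymath}
\pd R/I(G) \ \ge \ c + \left\lceil \frac{n}{c} \right\rceil - 2 .
\end{displaymath}
The entire theorem thus reduces to verifying this one inequality for all connected graphs on at most $12$ vertices.

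I would organize the verification by the codimension $c$, which also fixes $\dim G = n - c$. Since $\pd R/I(G) \ge c$ always holds (because $\depth G \le \dim G$), the bound is automatic as soon as $\lceil n/c \rceil \le 2$, i.e. for $c \ge n/2$; the substantive range is $c \le \lfloor n/2 \rfloor$, where $G$ carries an independent set of size $n - c$ and all its edges run within, or out of, a fixed vertex cover of size $c$. To produce the required lower bounds on $\pd$ I would lean on two tools: the monotonicity $\pd R/I(H) \le \pd R/I(G)$ for induced subgraphs $H \subseteq G$ (a consequence of Hochster's formula, since the independence complex of $H$ is the induced subcomplex of that of $G$), and the exact projective dimensions of extremal configurations such as induced stars, double stars and their disjoint unions. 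The equality cases should coincide with the graphs built in the proof of \cite[Theorem 2.8]{HKMVT} to realize the boundary of $C^*(n)$, which both pins down the extremal graphs to inspect and provides a consistency check that the bound is sharp.

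The main obstacle is to establish the lower bound uniformly, since the connected graphs on $12$ vertices are far too numerous to enumerate directly. The strategy I would pursue is to confine any hypothetical violator to a short, explicitly describable list: combining the induced-subgraph monotonicity above with known upper bounds on projective dimension coming from graph domination parameters, a graph with $\pd R/I(G) < c + \lceil n/c \rceil - 2$ would be forced into a restricted family, which can then be settled case by case, by hand or by computer algebra, for each admissible $c$ and each $n \le 12$. I expect the difficulty to grow with the intermediate values of $c$, where the family of graphs with a vertex cover of size $c$ is richest; it is exactly the failure of this reduction to terminate in a bounded list for $n \ge 13$---the gap between the available domination-type upper bounds and the target lower bound closing only in the small range---that explains why the clean equality $C^*(n) = \Graph_{\dim, \depth}(n)$ can be certified here only up to $n = 12$.
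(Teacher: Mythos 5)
This theorem is quoted from \cite{HKU} and the paper gives no proof of it, so strictly there is no internal proof to compare with; the relevant comparison is with the paper's own Theorem \ref{main} and Corollary \ref{maina}, which establish $C^*(n)=\Graph_{\dim,\depth}(n)$ for \emph{every} $n\ge 3$ and thus subsume the $n\le 12$ statement. Your first two paragraphs are correct and match the reformulation used in \cite{HKU}: the inclusion $C^*(n)\subseteq\Graph_{\dim,\depth}(n)$ comes from \cite{HKMVT}, and the Auslander--Buchsbaum translation of $d\le(n-d)(d-p+1)$ into $\pd R/I(G)\ge c+\lceil n/c\rceil-2$ with $c=\height I(G)=n-\dim G$ is right, as are the remarks that the case $c\ge n/2$ is trivial and that $\pd$ is monotone under passing to induced subgraphs.

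The gap is that your third paragraph is a programme, not an argument: the entire content of the theorem is the lower bound on $\pd$, and you neither exhibit the ``restricted family'' a violator would be confined to nor prove anything about it. Moreover, the tool you name for confining violators --- the domination-parameter results of Dao and Schweig \cite{DaoSchweig} --- gives \emph{upper} bounds on $\pd$, and an upper bound cannot force a graph with unexpectedly small projective dimension into a finite list; you would need structural consequences of $\pd$ being small, i.e.\ lower bounds. The standard such bound, $\pd R/I(G)\ge n-\p(G)$ (equivalently $\depth G\le \p(G)$, Lemma \ref{depmax}(2), coming from the big height of $I(G)$), reduces the whole theorem to the purely combinatorial inequality $\d(G)\le (n-\d(G))(\d(G)-\p(G)+1)$, which is exactly the $r=1$ case of the inequality the paper proves by induction on $\lvert V(G)\rvert$ using Lemma \ref{depmax}(3) and (4) --- with no case analysis, no computer verification, and no restriction to $n\le 12$. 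As written, your proposal establishes the easy inclusion and a correct reformulation, but the verification that every connected graph on at most $12$ vertices satisfies the bound, which is the actual assertion, is left entirely open.
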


\begin{theorem}\cite[Theorem $1.5$]{HKU}
	
	Let $n \geq 2$. Then we have $C^*(n) = \Graph^{chordal}_{\dim, \depth}(n)$
	where $\Graph^{chordal}_{\dim, \depth}(n)$ is restriction of $\Graph_{\dim, \depth}(n)$ to chordal graphs. 
\end{theorem}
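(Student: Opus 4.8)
The plan is to translate the statement into a purely graph-theoretic identity and then establish the two inclusions $C^*(n)\subseteq \Graph^{chordal}_{\dim,\depth}(n)$ and $\Graph^{chordal}_{\dim,\depth}(n)\subseteq C^*(n)$ by separate arguments.

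First I would pin down what $\dim G$ and $\depth G$ mean combinatorially when $G$ is chordal. Since $G$ is chordal, $R/I(G)$ is sequentially Cohen--Macaulay (Francisco--Van Tuyl). Combining this with Terai's identity $\projdim(R/I)=\reg(I^{\vee})$ and the Herzog--Hibi description of sequential Cohen--Macaulayness (the Alexander dual $I(G)^{\vee}$ is componentwise linear, so its regularity is the top degree of a minimal generator, namely the largest size of a minimal vertex cover), one gets $\projdim(R/I(G))=\mathrm{bight}(I(G))$. Reading minimal primes as vertex covers then yields $\dim G=\alpha(G)$, the independence number, and $\depth G=n-\mathrm{bight}(I(G))=i(G)$, the independent domination number (the least cardinality of a maximal independent set). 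After this reduction the theorem is exactly the statement that $\{(\alpha(G),i(G)) : G \text{ connected chordal on } [n]\}=C^*(n)$.

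For the inclusion $C^*(n)\subseteq\Graph^{chordal}_{\dim,\depth}(n)$ I would realize each admissible pair by a split graph, which is automatically chordal. Given $(d,p)\in C^*(n)$, take a vertex set $S\cup T$ with $S$ independent of size $d$, $T$ a clique of size $n-d$, and all further edges running between $S$ and $T$; this graph is connected because $T$ is a nonempty clique dominated by every vertex. Arranging that every vertex of $T$ has a neighbour in $S$ forces $\alpha=d$, and a short analysis of the maximal independent sets (each is either $S$ itself or has the form $\{t\}\cup(S\setminus N(t))$ for a single $t\in T$) gives $i=d-\Delta_S+1$, where $\Delta_S=\max_{t\in T}|N(t)\cap S|$. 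Hence it suffices to choose the bipartite adjacencies so that the $n-d$ sets $N(t)\cap S$ cover $S$ with largest block of size exactly $d-p+1$; such a covering exists precisely when $1\le p\le d$ and $d\le(n-d)(d-p+1)$, i.e. exactly on $C^*(n)$, and then $i=p$.

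The reverse inclusion is the hard part. The constraints $1\le i(G)\le\alpha(G)$ are automatic, and $\alpha(G)\le n-1$ follows since a connected graph on $n\ge2$ vertices has no isolated vertex. What remains is the inequality $\alpha\le(n-\alpha)(\alpha-i+1)$. Fixing a maximum independent set $S$ with $T=V\setminus S$, every vertex of $S$ has a neighbour in $T$, so $\alpha=|S|\le e(S,T)$; the difficulty is to convert this into the stated bound. I would not attempt the naive per-vertex estimate $\Delta_S\le\alpha-i+1$, since it already fails for simple trees (a vertex of large $S$-degree can coexist with a small $i$). Instead the argument must use chordality globally, and the natural route is induction on $n$ along a perfect elimination ordering: delete a simplicial vertex $v$ (or its closed neighbourhood $N[v]$, a clique), note that chordality is inherited, and control how $n$, $\alpha(G)=1+\alpha(G\setminus N[v])$, and especially $i(G)$ change, so as to feed the inductive hypothesis. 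The key structural tool available is that chordal graphs are $C_4$-free, so two non-adjacent vertices share at most one neighbour inside any independent set; this is what should ultimately bound the ``spread'' of $S$ across $T$ in terms of the gap $\alpha-i$. The main obstacle I anticipate is precisely the bookkeeping for $i(G)$ under these deletions, since the minimum maximal independent set does not behave as predictably as $\alpha(G)$; handling this, together with the low-dimensional base cases ($\alpha=n-1$, $p=1$, or small $n$), is where the real work lies.
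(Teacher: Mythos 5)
The paper does not prove this statement; it is quoted from \cite{HKU}, so there is no in-paper argument to compare against, and your proposal can only be assessed on its own terms. Your reduction is correct: chordal graphs are sequentially Cohen--Macaulay, hence $\projdim(R/I(G)) = \mathrm{bight}(I(G))$ via Terai's formula and componentwise linearity of the Alexander dual, which gives $\depth G = \p(G)$ (the minimum size of a maximal independent set) alongside the general fact $\dim G = \d(G)$. Your split-graph construction for the inclusion $C^*(n) \subseteq \Graph^{chordal}_{\dim,\depth}(n)$ is also correct: the maximal independent sets of such a graph are exactly $S$ and the sets $\{t\} \cup (S \setminus N(t))$, and the requirement that the sets $N(t)\cap S$ cover $S$ with largest block $d-p+1$ reproduces precisely the defining inequality of $C^*(n)$.

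The reverse inclusion, however, is a genuine gap, and you say so yourself. Everything in the theorem beyond the trivial constraints lives in the inequality $\d(G) \le (n - \d(G))(\d(G) - \p(G) + 1)$ for connected chordal $G$, and for this you offer only a proposed induction along a perfect elimination ordering together with the explicit admission that you cannot yet control $\p$ under the deletion. That is not a proof with a missing routine verification; it is the entire difficulty of the theorem. In particular, deleting an arbitrary simplicial vertex does not close the induction: one must choose the deleted vertex $v$ inside a maximum independent set (so that $\d$ drops by exactly one in passing to $G - N[v]$), take it of minimum degree $t$ among such vertices so that the loss of $t+1$ vertices can be absorbed into the factor $n - \d(G)$, and separately handle the isolated vertices created in $G - N[v]$, which is exactly where $\p$ misbehaves. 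This is the bookkeeping carried out (in greater generality, with $\reg$ included) in the second half of the proof of Theorem \ref{main} in this paper, whose cases (2) and (3) show how delicate the comparison among $\p(G)$, $\p(G-v)$ and $\p(G-N[v])$ is. Your sketch contains none of these choices, so as written the hard inclusion is unproved.
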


Generalizing these results, $\Graph_{\dim, \depth, \reg}(n)$ and $\Graph_{\dim, \depth}(n)$ are completely determined in this paper.

\begin{theorem}\label{main}
	
	For all $n \ge 3$, the following holds: 
	\[
	\Graph_{\dim, \depth, \reg}(n) = C^{**}(n), 
	\]
	where
	\begin{displaymath}
		\begin{aligned}
			C^{**}(n) = &\left\{(n - 1, 1, 1)\right\} \cup\\
			 &\left\{(d, p, r) \in \NN^3 \ \middle| \
			 \begin{array}{c}
			 	\mbox{$1 \le p \le d \le n - 2, 2 \le r + d \le n - 1$, }\\
			 	\mbox{$1 \le r \le d \le (n - d - (r - 1))(d - p + 1) + (r - 1)$}\\
			 \end{array}\right\}
		\end{aligned}
	\end{displaymath}
\end{theorem}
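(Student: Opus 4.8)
The plan is to establish the two inclusions $\Graph_{\dim,\depth,\reg}(n) \subseteq C^{**}(n)$ and $C^{**}(n) \subseteq \Graph_{\dim,\depth,\reg}(n)$ separately, first isolating the exceptional point. A connected graph satisfies $\dim G = n-1$ exactly when its independence number is $n-1$, i.e. its vertex cover number $\tau(G)$ equals $1$, which forces $G = K_{1,n-1}$; for this star one computes directly $\depth G = \reg G = 1$, giving the point $(n-1,1,1)$. Every connected non-star graph therefore has $\dim G \le n-2$, and the task reduces to placing its triple in the second set.

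For necessity I would first record the elementary constraints. We have $\depth G \ge 1$ since the edge ideal has no linear generators, so every vertex is a face of the independence complex and $\tilde H^{-1}$ vanishes; $\depth G \le \dim G$ always; $\reg G \ge 1$ and $\dim G \ge 1$ because $G$ has an edge, whence $r \ge 1$ and $r+d \ge 2$; and $\reg G \le \dim G$ by the standard bound for Stanley--Reisner rings, giving $r \le d$. The inequality $r+d \le n-1$ I would derive by combining a known upper bound for regularity with a connectivity argument, exhibiting the star as the unique graph with $r + d = n$ (equivalently $\reg G = \tau(G)$). The genuinely substantive inequality is the product bound $d \le (n-d-(r-1))(d-p+1)+(r-1)$.

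This product inequality is the crux of necessity, and I would prove it by induction on $r$. The base case $r=1$ is precisely the dimension--depth bound $d \le (n-d)(d-p+1)$ underlying $C^*(n)$, which I may assume. The inductive step requires a \emph{regularity-reduction lemma}: from a connected graph with $\reg G = r \ge 2$ one produces, by deleting a suitable vertex or edge (or passing to a chosen induced subgraph), a graph on fewer vertices with regularity $r-1$ and controlled dimension and depth. The exact bookkeeping that turns the $C^*$-bound into the shifted product bound is the passage $(n,d,p,r) \mapsto (n-2,\,d-1,\,p-1,\,r-1)$, which preserves $d-p+1$; one checks this substitution carries $d-1 \le (n-d-1)(d-p+1)$ back to the stated inequality for $r$. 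Producing such a reduction while holding $d-p+1$ fixed, and handling the boundary values of $p$ where the bound degenerates, is the heart of this direction.

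For sufficiency I would realize each admissible triple by an explicit connected graph built from two ingredients. The regularity-one layer $r=1$ should be realized by graphs whose edge ideal has a linear resolution, equivalently (by Fr\"oberg) graphs whose complement is chordal, chosen to attain every dimension--depth pair allowed by the $C^*$-bound; this is a regularity-controlled refinement of the realizability of $C^*(n)$. To climb from $r=1$ to arbitrary $r$ I would attach $r-1$ regularity-boosting gadgets, each raising $\reg$ and $\dim$ by one. The subtlety is that the naive gadget, a disjoint edge, also raises $\depth$ by one and destroys connectivity; so I would instead use a connected gadget (a pendant path or pendant triangle grafted onto the base), splitting into the cases $p \ge r$ and $p < r$ according to whether depth is meant to rise with regularity. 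I expect this decoupling to be the main obstacle: simultaneously steering $\dim$, $\depth$ and $\reg$ to prescribed values on a \emph{connected} graph with exactly $n$ vertices, and verifying that the defining inequalities of $C^{**}(n)$ always leave enough vertices to complete the assembly.
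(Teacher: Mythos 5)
Your sufficiency argument is essentially the paper's: start from regularity-one graphs realizing every pair in $C^{*}(n)$ (Lemma \ref{regz}) and attach $r-1$ regularity-boosting pendant edges through a new apex vertex that simultaneously plays the role of an $S$-suspension vertex, splitting into the cases $r\le p$ and $r>p$. That part of the plan is sound, though left as a sketch. The necessity direction, however, has two genuine gaps.

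First, your base case is circular. You propose induction on $r$ with base case ``$r=1$ is precisely the bound $d\le(n-d)(d-p+1)$ underlying $C^{*}(n)$, which I may assume.'' But the containment $\Graph_{\dim,\depth}(n)\subseteq C^{*}(n)$ is not available to assume: the prior literature only gives the reverse containment in general, plus equality for $n\le 12$ and for chordal graphs. Indeed, the full two-variable equality $\Graph_{\dim,\depth}(n)=C^{*}(n)$ is Corollary \ref{maina} of this paper and is \emph{deduced from} Theorem \ref{main}, not an input to it. Second, your inductive step rests entirely on an unproven ``regularity-reduction lemma'' producing, from any connected $G$ with $\reg G=r\ge 2$, a graph on $n-2$ vertices with $\reg$, $\dim$ and $\depth$ each decreased by exactly one. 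Controlling all three invariants simultaneously and exactly under such a reduction is precisely the hard content, and there is no reason to expect it to exist (deleting the two endpoints of an induced-matching edge need not drop the regularity, and gives no control on depth). The paper avoids this by inducting on $\lvert V(G)\rvert$ instead: it picks a vertex $v$ of minimal degree $t$ among vertices lying in maximum independent sets, and splits into three cases according to whether $\reg(G)=\reg(G-v)+1$ or $\reg(G)=\reg(G-v)$ with $\p(G)\le\p(G-v)$ or $\p(G)=\p(G-v)+1$, using Proposition \ref{reg}, the twin lemma (Lemma \ref{twin}) to rule out isolated vertices of $G-N[v]$ in the first case, and the slack term $t-1\ge m$ to absorb the error in the third. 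You would either need to prove your reduction lemma in full (including the boundary behaviour you flag) or switch to a vertex-deletion induction of this kind. Separately, your treatment of $r+d\le n-1$ correctly identifies that equality forces $\reg G=\m(G)$, but the conclusion that only the star survives requires Trung's classification (Theorem \ref{matching}) together with the Cameron--Walker computation of Theorem \ref{cwdrdd}, which your sketch does not supply.
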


\begin{corollary}\label{maina}
	
	For all $n \ge 3$, the following holds: 
	\[
	\Graph_{\dim, \depth}(n) = C^{*}(n). 
	\]
\end{corollary}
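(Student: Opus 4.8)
The plan is to derive Corollary~\ref{maina} from Theorem~\ref{main} by projection. First I would observe that since every graph $G$ has a well-defined regularity $\reg G$, a pair $(d,p)$ lies in $\Graph_{\dim, \depth}(n)$ if and only if $(d,p,r) \in \Graph_{\dim, \depth, \reg}(n)$ for \emph{some} $r \in \NN$; that is, $\Graph_{\dim, \depth}(n)$ is exactly the image of $\Graph_{\dim, \depth, \reg}(n)$ under the coordinate projection $\pi(d,p,r) = (d,p)$. By Theorem~\ref{main} this reduces the corollary to the purely combinatorial identity $\pi(C^{**}(n)) = C^*(n)$, which I would establish by two inclusions.

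For $C^*(n) \subseteq \pi(C^{**}(n))$, given $(d,p) \in C^*(n)$ I would split on the value of $d$. When $d \le n-2$, I would exhibit the witness $r = 1$: substituting $r=1$ into the defining conditions of the second set of $C^{**}(n)$ collapses the term $(n - d - (r-1))(d-p+1) + (r-1)$ to $(n-d)(d-p+1)$, so that the inequality $d \le (n-d)(d-p+1)$ defining $C^*(n)$ is exactly what is required, while $2 \le 1 + d \le n-1$ and $1 \le r \le d$ hold automatically from $1 \le p \le d \le n-2$. The boundary case $d = n-1$ is handled by the singleton $\{(n-1,1,1)\}$: a direct check shows that $(n-1,1)$ is the only pair in $C^*(n)$ with $d = n-1$, and it is precisely the projection of this singleton.

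For the reverse inclusion $\pi(C^{**}(n)) \subseteq C^*(n)$, I would take any $(d,p,r)$ in the second set of $C^{**}(n)$, write $s = r-1 \ge 0$ and $m = d-p+1 \ge 1$, and use the algebraic identity $(n-d-s)m + s = (n-d)m - s(m-1)$. Since $s(m-1) \ge 0$, the hypothesis $d \le (n-d-s)m + s$ forces $d \le (n-d)m = (n-d)(d-p+1)$, which together with $1 \le p \le d \le n-2 \le n-1$ places $(d,p)$ in $C^*(n)$; the singleton again projects to $(n-1,1) \in C^*(n)$. Combining the two inclusions with the observation of the first paragraph yields $\Graph_{\dim, \depth}(n) = C^*(n)$.

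I do not expect a serious obstacle: the entire content is the observation that the right-hand bound in $C^{**}(n)$ is non-increasing in $r$ (through the factor $s(m-1)$ with $m \ge 1$), so the choice $r=1$ is optimal and recovers exactly the $C^*(n)$ inequality. The only point demanding care is the separate treatment of $d = n-1$, where the generic construction with $r=1$ would violate $d \le n-2$ and one must instead invoke the exceptional singleton of $C^{**}(n)$.
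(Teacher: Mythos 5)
Your proposal is correct and follows essentially the same route as the paper: the paper also reduces to showing that the $r$-slices of $C^{**}(n)$ are nested with the $r=1$ slice equal to $C^*(n)$, which is exactly your observation that $(n-d-s)m+s=(n-d)m-s(m-1)\le (n-d)m$. Your write-up is in fact slightly more careful than the paper's, since you explicitly handle the exceptional point $(n-1,1,1)$ and the boundary $d=n-1$, which the paper's chain of inclusions glosses over.
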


\section{preliminaries}

First we prepare materials from graph theory and discuss their properties. Moreover we introduce a relationship of them and ring-theorical invariants of edge ideals. 

\begin{definition}
	Let $G = (V(G), E(G))$ be a graph and $v$ a vertex. We call $S \subset V(G)$ an \textit{independent set} of $G$ if $\{v,v'\} \not\in E(G)$ for any $v,v' \in S$. Moreover we define $\m(G)$, $\im(G)$, $N_G(v)$, $N_G[v]$, $\d(G)$ and $\p(G)$ as follows: 
	\begin{displaymath}
		\begin{aligned}
			\m(G) &= \max\{\lvert M \rvert \ | \ M \subset E(G), e \cap e' = \emptyset \text{ for any }e, e' \in M\}, \\
			\im(G) &= \max\left\{\lvert M \rvert \ \middle| \ 
			\begin{array}{c}
				\mbox{$M \subset E(G), \text{there is no }e'' \in E(G) \text{ with }$}\\
				\mbox{$e \cap e'' \ne \emptyset, e' \cap e'' \ne \emptyset \text{ for any } e,e' \in M$}\\
			\end{array}\right\}, \\
			N_G[v] &= N_G(v) \cup \{v\}, \\
			\d(G) &= \max\{\lvert S \rvert \ | \  S\text{ is a maximal independent set of }G\}, \\
			\p(G) &= \min\{\lvert S \rvert \ | \ S\text{ is a maximal independent set of }G\}. 
		\end{aligned}
	\end{displaymath}
\end{definition}

\begin{lemma}\label{depmax}
	For any graph $G$ and its vertex $v$, the followings hold. 
	\begin{enumerate}
		\item $\dim G = \d(G)$. 
		\item $\depth G \le \p(G)$. 
		\item $\p(G) \le \p(G - v) + 1$. 
		\item $\p(G) \le \p(G - N[v]) + 1$. 
	\end{enumerate}
\end{lemma}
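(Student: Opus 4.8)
The plan is to exploit the standard dictionary between $R/I(G)$ and its associated simplicial complex: the Stanley--Reisner complex of $I(G)$ is the independence complex $\Delta(G)$, whose faces are exactly the independent sets of $G$ and whose facets are the maximal independent sets, so that $R/I(G) = K[\Delta(G)]$. For part (1), I would recall that the Krull dimension of a Stanley--Reisner ring $K[\Delta]$ equals $\dim \Delta + 1$, i.e. the maximum cardinality of a face. Since the faces of $\Delta(G)$ are the independent sets and a largest independent set is automatically maximal, this maximum cardinality is precisely $\d(G)$; hence $\dim G = \d(G)$.

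For part (2), I would invoke the general inequality $\depth M \le \dim R/\pp$, valid for every associated prime $\pp$ of a finitely generated graded module $M$. The minimal primes over $I(G)$ are exactly the ideals $P_S = (x_i : i \notin S)$ with $S$ a maximal independent set (equivalently $[n]\setminus S$ a minimal vertex cover), and $\dim R/P_S = |S|$. Choosing $S$ to be a maximal independent set of minimum cardinality $\p(G)$ and observing that $P_S$ is a minimal prime over $I(G)$, hence an associated prime of $R/I(G)$, yields $\depth G \le \dim R/P_S = \p(G)$.

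For parts (3) and (4), I would argue combinatorially, producing from a minimum maximal independent set of the smaller graph a maximal independent set of $G$ of size at most one larger. In part (3), let $S$ be a maximal independent set of $G - v$ with $|S| = \p(G-v)$; then $S$ is still independent in $G$, and if $v$ has a neighbor in $S$ then $S$ is already maximal in $G$, while otherwise $S \cup \{v\}$ is maximal in $G$. Either way $\p(G) \le |S| + 1$. In part (4), let $S$ be a maximal independent set of $G - N[v]$ with $|S| = \p(G - N[v])$. Since $S \cap N[v] = \emptyset$, the vertex $v$ has no neighbor in $S$, so $S \cup \{v\}$ is independent in $G$; moreover any vertex outside $S \cup \{v\}$ either lies in $N(v)$, hence is adjacent to $v$, or lies in $V(G - N[v]) \setminus S$, hence is adjacent to some element of $S$ by maximality. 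Thus $S \cup \{v\}$ is maximal in $G$, giving $\p(G) \le |S| + 1$.

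The gluing arguments in (3) and (4) are routine combinatorics. The step carrying the real content is (2), as it is the one place where a ring-theoretic invariant (depth) is controlled by a purely graph-theoretic quantity. I expect the crux to be the identification of the minimal primes of $I(G)$ with the maximal independent sets together with the invocation of $\depth M \le \dim R/\pp$ for associated primes; once that dictionary is set up, the inequality $\depth G \le \p(G)$ is immediate.
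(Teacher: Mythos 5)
Your proposal is correct, and for parts (1), (2) and (4) it matches or properly fills in what the paper does: the paper dismisses (1) and (2) as well-known facts (your Stanley--Reisner identification for (1) and the bound $\depth M \le \dim R/\pp$ for associated primes $\pp$, applied to the minimal primes coming from maximal independent sets, are exactly the standard justifications), and for (4) the paper gives the identical one-line argument that $S \cup \{v\}$ is a maximal independent set of $G$. The only real divergence is in (3): you argue directly by taking a minimum maximal independent set $S$ of $G - v$ and lifting it to a maximal independent set of $G$ (either $S$ itself when $v$ has a neighbour in $S$, or $S \cup \{v\}$ otherwise), which immediately gives $\p(G) \le \p(G-v) + 1$; the paper instead assumes $\p(G) > \p(G-v)$ and runs a case analysis on a minimum maximal independent set of $G$ containing $v$ to conclude the gap is exactly one. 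Your version is the more transparent of the two --- it establishes the stated inequality in one clean construction, whereas the paper's argument for (3) takes a less direct route --- and both are valid.
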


\begin{proof}
	\begin{enumerate}
		\item[(1), (2)] Those are well known facts. 
		\item[(3)] Suppose $p = \p(G) > \p(G - v)$. Then there is a maximal independent set $S$ of $G$ such that $\lvert S \rvert = p$, $v \in S$. 
		Thus $\p(G - N[v]) = p - 1$ because $S \setminus \{v\}$ is a maximal independent set of $G - N[v]$ with minimum cardinality.. 
		Therefore $p - 1 \ge \p(G - v)$. 
		
		If $p - 1 > \p(G - v)$ then there exists a maximal independent set $S' \subset V(G - v)$ such that 
		$S' \cap N[v] \ne \emptyset$ because $S'$ is not a subset of $V(G - N[v])$. 
		Thus $\p(G - v) = p - 1 = \p(G) - 1$. 
		\item[(4)] Let $S$ be a maximal independent set of $G - N[v]$ with $\lvert S \rvert = \p(G - N[v])$. Then $S \cup \{v\}$ is also a maximal independent set of $G$. 
	\end{enumerate}
\end{proof}

Second we mention some properties of regularity and relation with other invariants. 

\begin{proposition}\label{regbasic}\cite[Corollary $18.6$]{Peeva}
	Suppose that $0 \to U \to U' \to U'' \to 0$ is a short exact sequence of graded finitely generated 
	$K[x_1, \dots, x_n]$-modules with homomorphisms of degree $0$. Then 
	\begin{enumerate}
		\item If $\reg(U') > \reg(U'')$, then $\reg(U) = \reg(U')$. 
		\item If $\reg(U') < \reg(U'')$, then $\reg(U) = \reg(U'') + 1$. 
		\item If $\reg(U') = \reg(U'')$, then $\reg(U) \le \reg(U'') + 1$. 
	\end{enumerate} 
\end{proposition}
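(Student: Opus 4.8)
The plan is to deduce all three cases from the long exact sequence in local cohomology together with the cohomological description of regularity. Writing $\mm = (x_1,\dots,x_n)$ and $a_i(M) = \sup\{j : H^i_{\mm}(M)_j \ne 0\}$ (with the convention $a_i(M) = -\infty$ when $H^i_{\mm}(M) = 0$), recall that $\reg(M) = \max_i\{a_i(M) + i\}$. Applying $H^{\bullet}_{\mm}(-)$ to the given short exact sequence yields, in each internal degree $j$, the long exact sequence $\cdots \to H^{i-1}_{\mm}(U'')_j \to H^i_{\mm}(U)_j \to H^i_{\mm}(U')_j \to H^i_{\mm}(U'')_j \to H^{i+1}_{\mm}(U)_j \to \cdots$, and every comparison below is read off from three consecutive terms of this sequence.

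First I would establish the uniform upper bound $\reg(U) \le \max\{\reg(U'), \reg(U'') + 1\}$. From exactness of $H^{i-1}_{\mm}(U'')_j \to H^i_{\mm}(U)_j \to H^i_{\mm}(U')_j$, nonvanishing of $H^i_{\mm}(U)_j$ forces $H^{i-1}_{\mm}(U'')_j \ne 0$ or $H^i_{\mm}(U')_j \ne 0$; hence $a_i(U) \le \max\{a_{i-1}(U''), a_i(U')\}$, and adding $i$ and taking the maximum over $i$ gives the claimed bound. This already settles case (3), and it yields the inequality ``$\le$'' in cases (1) and (2) once one notes that the strict hypotheses $\reg(U') > \reg(U'')$ and $\reg(U') < \reg(U'')$ (hence differing by at least $1$ on integers) collapse the maximum to $\reg(U')$ and to $\reg(U'') + 1$ respectively.

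It remains to match these with lower bounds. In case (1), choose $i$ with $a_i(U') + i = \reg(U')$ and put $j = a_i(U')$; since $\reg(U'') < \reg(U')$ one gets $a_i(U'') < j$, so $H^i_{\mm}(U'')_j = 0$ and the map $H^i_{\mm}(U)_j \to H^i_{\mm}(U')_j$ is surjective onto a nonzero group, forcing $H^i_{\mm}(U)_j \ne 0$ and hence $\reg(U) \ge j + i = \reg(U')$. In case (2), choose $i$ with $a_i(U'') + i = \reg(U'')$ and put $j = a_i(U'')$; since $\reg(U') < \reg(U'')$ one gets $a_i(U') < j$, so $H^i_{\mm}(U')_j = 0$, and now the connecting map $H^i_{\mm}(U'')_j \to H^{i+1}_{\mm}(U)_j$ is injective on a nonzero group, forcing $H^{i+1}_{\mm}(U)_j \ne 0$ and hence $\reg(U) \ge j + (i+1) = \reg(U'') + 1$. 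The same argument runs verbatim with $\Tor_{\bullet}(-,K)$ and Betti numbers in place of local cohomology, using $\reg(M) = \max\{j - i : \beta_{i,j}(M) \ne 0\}$.

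I expect the only real subtlety to be the two lower-bound steps: one must verify that at the critical degree $j$ the neighbouring term in the long exact sequence genuinely vanishes, which is precisely where the strict inequality between $\reg(U')$ and $\reg(U'')$ is consumed. Without strictness the flanking group need not vanish, and the surjectivity or injectivity that produces the nonzero class in $H^{\bullet}_{\mm}(U)$ can fail, which matches the fact that in case (3) only an inequality is asserted. A minor bookkeeping point is the $-\infty$ convention for vanishing cohomology, which keeps the ``$\max$'' manipulations valid even when some of the modules have no local cohomology in the relevant homological degree.
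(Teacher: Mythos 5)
Your argument is correct: the uniform bound $a_i(U) \le \max\{a_{i-1}(U''), a_i(U')\}$ from the long exact sequence in local cohomology gives $\reg(U) \le \max\{\reg(U'), \reg(U'')+1\}$, and your two lower-bound steps correctly use the strict inequalities to kill the flanking term $H^i_{\mm}(U'')_j$ (resp.\ $H^i_{\mm}(U')_j$) at the critical degree, so the surjection (resp.\ injective connecting map) produces the required nonzero class. The paper itself gives no proof of this proposition---it is quoted verbatim from Peeva's book---and your derivation is the standard one found there (equivalently via the $\Tor$ long exact sequence), so there is nothing to compare beyond noting that your proof is a complete and correct substitute for the citation.
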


\begin{proposition}\label{reg}\cite[Lemma 2.10]{DHS}
	For any graph $G$ and any vertex $x \in V(G)$, the following holds. 
	\[
	\reg G \in \left\{\reg \frac{R}{(I(G), x)}, \reg \frac{R}{(I(G) : x)} + 1\right\}.  
	\]
\end{proposition}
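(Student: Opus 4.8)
The plan is to analyze the standard short exact sequence attached to multiplication by the variable $x$ and feed it into Proposition \ref{regbasic}. Writing $I = I(G)$, the map sending (the class of) $f$ to $xf$ gives a degree-preserving short exact sequence
\[
0 \to \left(\frac{R}{(I : x)}\right)(-1) \xrightarrow{\ \cdot\, x\ } \frac{R}{I} \to \frac{R}{(I, x)} \to 0,
\]
which is injective because $xf \in I$ forces $f \in (I:x)$, and has cokernel $R/(xR + I) = R/(I,x)$. Setting $U = (R/(I:x))(-1)$, $U' = R/I$ and $U'' = R/(I,x)$, this has exactly the shape required by Proposition \ref{regbasic}. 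The shift by $-1$ records that $x$ has degree $1$, so $\reg U = \reg\frac{R}{(I:x)} + 1$, the quantity in the statement; it therefore suffices to prove $\reg U' \in \{\reg U'', \reg U\}$.

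Reading off the cases of Proposition \ref{regbasic}: if $\reg U' > \reg U''$ then $\reg U = \reg U'$, so $\reg\frac{R}{I} = \reg\frac{R}{(I:x)} + 1$; and if $\reg U' = \reg U''$ then $\reg\frac{R}{I} = \reg\frac{R}{(I,x)}$. In both cases the desired membership holds. The only remaining possibility is $\reg U' < \reg U''$, in which case the proposition forces $\reg U = \reg U'' + 1$, giving $\reg U' < \reg U'' < \reg U$ and violating the claim. Hence the whole argument reduces to excluding this last case, i.e.\ to proving the inequality $\reg\frac{R}{(I,x)} \le \reg\frac{R}{I}$.

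To establish that inequality I would use the identification $R/(I,x) \cong R'/I(G - x)$, where $G - x$ is the induced subgraph of $G$ on the vertices other than $x$ and $R'$ is the polynomial ring in the remaining variables (regularity being intrinsic to the minimal graded free resolution and so insensitive to the deleted variable). I would then invoke the monotonicity of regularity under passage to induced subgraphs, $\reg\frac{R'}{I(G-x)} \le \reg\frac{R}{I}$, which follows from Hochster's formula: every induced subcomplex of the independence complex of $G - x$ is already an induced subcomplex of that of $G$, so passing to $G - x$ produces no new nonvanishing reduced homology and hence no larger regularity. Combined with the two admissible cases above, this finishes the proof.

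The hard part is genuinely the inequality $\reg\frac{R}{(I,x)} \le \reg\frac{R}{I}$. Proposition \ref{regbasic} by itself is symmetric enough to permit the forbidden case $\reg U' < \reg U''$; only the squarefree, induced-subgraph structure of edge ideals rules it out, and everything else is formal manipulation of the short exact sequence. One could instead cite the induced-subgraph monotonicity of edge-ideal regularity as a black box, but I would keep the Hochster justification visible, since that is precisely what makes the dichotomy sharp.
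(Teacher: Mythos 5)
The paper offers no proof of this statement---it is quoted directly from \cite[Lemma 2.10]{DHS}---so there is nothing internal to compare against; judged on its own, your argument is correct and is essentially the standard proof of that lemma. You correctly identify that the short exact sequence $0 \to (R/(I:x))(-1) \xrightarrow{\cdot x} R/I \to R/(I,x) \to 0$ together with Proposition \ref{regbasic} reduces everything to excluding the case $\reg(R/I) < \reg(R/(I,x))$, and your appeal to Hochster's formula (every induced subcomplex of the independence complex of $G-x$ is one of $G$) is exactly the right way to rule it out; this is also where the hypothesis that $I$ is an edge ideal, rather than an arbitrary monomial ideal, genuinely enters.
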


\begin{theorem}\cite[Theorem 6.7]{HVT}\cite[Lemma 2.2]{Katzman}
	For any graph $G$, the following inequality holds. 
	\[
	\im(G) \le \reg(G) \le \m(G). 
	\]
\end{theorem}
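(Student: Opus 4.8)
The plan is to prove the two inequalities separately, since they come from genuinely different mechanisms: the upper bound $\reg(G) \le \m(G)$ is amenable to an inductive deletion argument using the tools already assembled above, while the lower bound $\im(G) \le \reg(G)$ is most cleanly obtained from the behaviour of regularity under passage to induced subgraphs.

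For the upper bound I would argue by induction on the number of vertices of $G$. The base case is the graph with no edges, where $I(G) = 0$ and $\reg(G) = 0 = \m(G)$. For the inductive step, choose any non-isolated vertex $x$ and apply Proposition~\ref{reg}, which gives
\[
\reg G \in \left\{ \reg \frac{R}{(I(G), x)}, \ \reg \frac{R}{(I(G):x)} + 1 \right\}.
\]
The point is to recognise both quotients as edge ideals of smaller graphs: one checks that $R/(I(G), x) \cong R'/I(G - x)$ after deleting the variable $x$, and that $I(G):x = \bigl(I(G - N_G[x]), \{\, y : y \in N_G(x)\,\}\bigr)$, so that $R/(I(G):x)$ is $R''/I(G - N_G[x])$ modulo the regular linear forms indexed by $N_G(x)$; since quotienting by such linear forms leaves regularity unchanged, $\reg R/(I(G):x) = \reg R/I(G - N_G[x])$. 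It then suffices to combine the induction hypothesis with the matching inequalities $\m(G - x) \le \m(G)$ and $\m(G - N_G[x]) \le \m(G) - 1$. The first is clear, and the second holds because any maximum matching of $G - N_G[x]$ avoids $N_G[x]$ and hence can be enlarged by one edge incident to $x$; both candidate values in the display are therefore at most $\m(G)$.

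For the lower bound I would first record that regularity of edge ideals is monotone under induced subgraphs, i.e. $\reg R/I(H) \le \reg R/I(G)$ whenever $H$ is an induced subgraph of $G$. This is the step I expect to be the main obstacle, since it is not formal from the short exact sequence calculus of Proposition~\ref{regbasic} alone; the clean route is Hochster's formula, which expresses the graded Betti numbers of $R/I(G)$ through the reduced homology of the independence complexes of induced subgraphs, so that the invariants controlling $\reg R/I(H)$ appear among those controlling $\reg R/I(G)$. Granting this, let $M$ be an induced matching with $|M| = \im(G)$. The subgraph $H$ induced on the endpoints of $M$ is a disjoint union of $\im(G)$ edges, so $I(H)$ is a complete intersection of $\im(G)$ quadrics in pairwise disjoint variables; its Koszul resolution gives $\reg R/I(H) = \im(G)$. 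Combining with monotonicity yields $\im(G) = \reg R/I(H) \le \reg R/I(G) = \reg(G)$.

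Thus the only non-routine ingredient is the induced-subgraph monotonicity of regularity; everything else reduces to the deletion/colon formalism of Propositions~\ref{regbasic} and \ref{reg} together with elementary facts about matchings and complete intersections.
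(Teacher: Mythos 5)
This theorem is quoted by the paper from the literature (H\`a--Van Tuyl and Katzman) and is not proved in the text, so there is no internal proof to compare against; your argument has to be judged against the standard proofs in those references, and it matches them. Your upper bound is exactly the deletion/colon induction: recognising $(I(G),x)$ and $I(G):x$ as (up to irrelevant variables and regular linear forms) the edge ideals of $G-x$ and $G-N_G[x]$, and pairing Proposition~\ref{reg} with $\m(G-x)\le \m(G)$ and $\m(G-N_G[x])\le \m(G)-1$ is correct, including the augmentation of a matching of $G-N_G[x]$ by an edge at the non-isolated vertex $x$. Your lower bound is precisely Katzman's: the restriction lemma $\beta_{i,j}(R/I(H))\le\beta_{i,j}(R/I(G))$ for induced subgraphs $H$, which as you say comes from Hochster's formula and not from the exact-sequence calculus, applied to the induced subgraph on the endpoints of a maximum induced matching, whose edge ideal is a complete intersection of $\im(G)$ quadrics of regularity $\im(G)$. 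You correctly isolate that restriction lemma as the one genuinely non-formal ingredient; everything else is routine, and I see no gap.
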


\begin{theorem}\cite[Theorem 11]{TNT}\label{matching}
	Let $G$ be a graph. Then, $\reg(I(G)) = \m(G) + 1$ if and only if each connected component of $G$ is either a pentagon or a Cameron-Walker graph.
\end{theorem}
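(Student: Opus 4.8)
The plan is to rewrite the equality in terms of $R/I(G)$ and then split into the two implications after reducing to connected graphs. Since $\reg(I(G)) = \reg G + 1$, the assertion $\reg(I(G)) = \m(G) + 1$ is equivalent to $\reg G = \m(G)$, i.e.\ to the upper bound in $\im(G) \le \reg G \le \m(G)$ being attained. Both invariants are additive over connected components: regularity because, after relabelling variables, $I(G)$ is the sum of the edge ideals of the components in disjoint variable sets, so $R/I(G)$ is a tensor product over $K$ and $\reg$ adds; and $\m$ trivially. Writing $G = G_1 \sqcup \cdots \sqcup G_c$ and using $\reg G_i \le \m(G_i)$ componentwise, the global equality holds if and only if $\reg G_i = \m(G_i)$ for every $i$. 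Hence it suffices to treat a connected graph $G$ and show that $\reg G = \m(G)$ holds exactly when $G$ is a pentagon or a Cameron--Walker graph.

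For sufficiency I would treat the two families separately. If $G$ is a Cameron--Walker graph, then $\im(G) = \m(G)$ (this is exactly the content of the Cameron--Walker structure theorem, which characterizes the connected graphs attaining $\im = \m$), and feeding this into the sandwich $\im(G) \le \reg G \le \m(G)$ forces $\reg G = \m(G)$ immediately. If $G = C_5$, then $\m(C_5) = 2$, while applying Proposition \ref{reg} to any vertex $x$ gives $\reg(C_5) \in \{\reg(P_4),\, \reg(P_2) + 1\} = \{1, 2\}$, since $P_4$ is a forest (so $\reg P_4 = \im(P_4) = 1$) and a single edge has regularity $1$; the value is $\reg(C_5) = 2$, the known regularity of the five-cycle. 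Additivity from the first paragraph then extends both computations to arbitrary disjoint unions.

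The necessity direction is where the real work lies. Assume $G$ is connected with $\reg G = \m(G)$. If $\im(G) = \m(G)$, the Cameron--Walker theorem identifies $G$ as a Cameron--Walker graph and we are done, so the remaining task is to prove that a connected graph with $\im(G) < \m(G) = \reg G$ must be $C_5$. I would argue by strong induction on $|V(G)|$, invoking the recursion $\reg G \in \{\reg(G - x),\, \reg(G - N[x]) + 1\}$ of Proposition \ref{reg} for a carefully chosen vertex $x$ (a leaf, the neighbor of a leaf, or a vertex on a shortest cycle, according to the local structure) and pairing it with the deletion behavior of the matching number, namely $\m(G) - 1 \le \m(G - x) \le \m(G)$ and $\m(G - N[x]) \le \m(G)$. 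The aim is to show that unless $G$ has the rigid pendant-edge/pendant-triangle shape of a Cameron--Walker graph (excluded here by $\im(G) < \m(G)$) or collapses onto the five-cycle, one of the two branches of the recursion returns an induced subgraph whose regularity is at most $\m(G) - 1$, contradicting $\reg G = \m(G)$.

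The main obstacle is precisely this combinatorial case analysis in the inductive step: the matching number drops by $0$ or $1$ under a single vertex deletion but by an uncontrolled amount under deletion of a closed neighborhood, so matching the two branches of the regularity recursion against the possible drops of $\m$ demands a delicate choice of $x$ and separate treatment of leaves, triangles, and longer cycles. The pentagon is the unique exceptional configuration in which $\reg$ follows the $\reg(G - N[x]) + 1$ branch up to $\m(G)$ while $\im(G)$ stays strictly below; isolating $C_5$ as the only such connected graph—showing that every other cycle, and every other graph with $\im < \m$, loses a unit of regularity relative to its matching number—is the crux of the proof.
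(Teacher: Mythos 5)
This statement is quoted from Trung \cite{TNT}; the paper offers no proof of its own, so your proposal can only be measured against the cited source. Your reduction to connected components (additivity of $\reg$ and $\m$ over disjoint unions, combined with the componentwise bound $\reg G_i \le \m(G_i)$) and your sufficiency argument (Cameron--Walker graphs satisfy $\im = \m$, so the sandwich $\im(G) \le \reg G \le \m(G)$ collapses; $C_5$ is a direct computation) are correct and correspond to the easy half of Trung's theorem. One small caveat there: Proposition \ref{reg} alone only yields $\reg (C_5) \in \{1,2\}$; to pin down the value $2$ you must exhibit a nonvanishing Betti number, e.g.\ via the standard formula $\reg(R/I(C_n)) = \lfloor (n+1)/3 \rfloor$ or the nontrivial first homology of the independence complex of $C_5$, rather than appeal to ``the known regularity.''

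The genuine gap is the necessity direction, which you yourself identify as the crux: showing that a connected graph with $\im(G) < \m(G) = \reg G$ must be $C_5$. What you give is a plan (strong induction, a cleverly chosen vertex $x$, playing the two branches of $\reg G \in \{\reg(G-x), \reg(G-N[x])+1\}$ against the behavior of $\m$), not an argument. The difficulty is exactly where you locate it: $\m(G - N[x])$ can drop by far more than one, so the branch $\reg(G-N[x])+1 \le \m(G-N[x])+1$ produces no contradiction with $\reg G = \m(G)$, and the branch $\reg(G-x) \le \m(G-x) \le \m(G)$ produces none either; the recursion by itself cannot force the loss of a unit of regularity that your strategy requires. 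Trung's proof does not run this induction naively --- it relies on a sharper upper bound for regularity in terms of restricted matchings and on the Cameron--Walker structure theorem to control the neighborhood of a maximum matching in a connected graph with $\im < \m$. Without carrying out that classification, or importing an equivalent strengthened bound, the hard implication is not established, so as written the proposal is an outline with its central step missing. A final, smaller point: Trung's theorem as usually stated (and as this paper itself invokes it later, in the proof of Theorem \ref{main}, where stars and star triangles appear alongside pentagons and Cameron--Walker graphs) admits additional degenerate components, and a completed proof must account for them.
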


\begin{theorem}\cite[Theorem 4.1]{HMVT}
	Let G be a graph on n vertices. Then $\deg h_{R/I(G)}(t) + \reg(R/I(G)) \le n$, 
	and $\dim(R/I(G)) + \reg(R/I(G)) \le n$ hold.  
\end{theorem}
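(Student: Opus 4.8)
The plan is to prove the two stated inequalities in sequence: first $\dim G + \reg G \le n$, and then to deduce $\deg h_G + \reg G \le n$ from it together with the auxiliary bound $\deg h_G \le \dim G$, which holds precisely because $I(G)$ is squarefree. So the whole argument rests on one structural input plus standard combinatorial duality.

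For $\dim G + \reg G \le n$ I would reduce everything to graph theory. By Lemma \ref{depmax}(1), $\dim G = \d(G)$ is the maximum cardinality of an independent set, i.e. the independence number, and by the bound $\reg G \le \m(G)$ recalled above it suffices to show $\d(G) + \m(G) \le n$. Writing $\tau(G)$ for the minimum cardinality of a vertex cover, I would use Gallai's identity $\d(G) + \tau(G) = n$ (the complement of a maximum independent set is a minimum vertex cover) together with the weak duality $\m(G) \le \tau(G)$ (each edge of a matching forces a distinct vertex into any vertex cover). Combining these,
\[
\dim G + \reg G \;\le\; \d(G) + \m(G) \;\le\; \d(G) + \tau(G) \;=\; n.
\]

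For the first inequality the crucial point is $\deg h_G \le \dim G$. I would write the Hilbert series as $H_{R/I(G)}(t) = K(t)/(1-t)^n$, where $K(t) = \sum_{i,j} (-1)^i \beta_{i,j}(R/I(G))\, t^{j}$ is the $K$-polynomial. Since $I(G)$ is a squarefree monomial ideal, its minimal free resolution is supported in squarefree multidegrees, so $\beta_{i,j}(R/I(G)) = 0$ whenever $j > n$; hence $\deg K \le n$. As $h_G(t) = K(t)/(1-t)^{\,n-\dim G}$ (the order of the pole of $H_{R/I(G)}$ at $t=1$ being exactly $\dim G$), we obtain $\deg h_G = \deg K - (n - \dim G) \le \dim G$. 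Equivalently, $h_G$ is the $h$-polynomial of the independence complex of $G$, whose $h$-vector is supported in degrees $0, \dots, \dim G$. Combining with the inequality already established,
\[
\deg h_G + \reg G \;\le\; \dim G + \reg G \;\le\; n.
\]

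The main obstacle is the auxiliary claim $\deg h_G \le \dim G$, since it fails for general standard graded algebras — for instance $K[x]/(x^m)$ has $\deg h = m-1$ but dimension $0$ — so the proof must genuinely exploit that $I(G)$ is squarefree, through the vanishing $\beta_{i,j} = 0$ for $j > n$. Once this structural input is available, the remaining ingredients (Gallai's identity, the matching–cover duality, and the bound $\reg G \le \m(G)$) are standard, and the argument is insensitive to the precise value of $\depth G$; I would only check the degenerate cases, such as when $G$ has no edges, where $h_G = 1$ and $\reg G = 0$ make both inequalities immediate.
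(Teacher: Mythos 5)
This statement is quoted from \cite[Theorem 4.1]{HMVT} and the paper gives no proof of it, so there is nothing internal to compare against; judging your argument on its own, it is correct and is essentially the standard proof of this result. Your chain $\dim G = \d(G) = n - \tau(G)$, $\reg G \le \m(G) \le \tau(G)$ gives the second inequality, and the first follows from $\deg h_G \le \dim G$, which indeed holds for any Stanley--Reisner ring (the $h$-vector of a $(d-1)$-dimensional simplicial complex is supported in degrees $0,\dots,d$; your derivation via the vanishing of $\beta_{i,j}$ for $j>n$ is a valid way to see this). The only point worth flagging is that you correctly identify where squarefreeness is genuinely used --- the bound $\deg h_G \le \dim G$ --- since, as you note, it fails for general graded quotients.
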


Next we introduce the operation on graphs called $S$-suspension. This notion is initiated in \cite{HKM}.  

\begin{definition}
	Let $G = (V(G), E(G))$ be a graph and $S \subset V(G)$ an independent set. The \textit{$S$-suspension} of $G$, denoted by $G^{S}$, is defined as follows. 
	\begin{itemize}
		\item $V(G^{S}) = V(G) \cup \{v\}$, 
		\item $E(G^{S}) = E(G) \cup \{\{v, w\} : w \in V(G) \setminus S\}. $
	\end{itemize}
\end{definition}

The following properties of $S$-suspension are known. 

\begin{lemma}\label{susp}\cite[Lemma $1.2$]{HKMVT}
	Let $G$ be a graph on $V(G) = \{x_1,\ldots,x_n\}$ and
	$G^S$ the $S$-suspension of $G$ for some independent set $S$ of $G$.  
	If $I(G) \subseteq R =K[x_1,\ldots,x_n]$ and
	$I(G^s) \subseteq  R' = K[x_1,\ldots,x_n,x_{n+1}]$
	are the respective edge ideals,
	then
	\begin{enumerate}
		\item[$(i)$] $\dim(R'/I(G^{S})) = \dim R/I(G)$ if $|S| \leq \dim R/I(G) - 1$. 
		\item[$(ii)$] ${\rm depth}(R'/I(G^{S})) = {\rm depth}(R/I(G))$ if $|S| = {\rm depth}(R/I(G)) - 1$. 
		\item[$(iii)$] ${\rm depth}(R'/I(G^{S})) = 1$ if $S = \emptyset$. 
	\end{enumerate} 
\end{lemma}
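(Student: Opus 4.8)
The plan is to handle the dimension statement $(i)$ by pure combinatorics via Lemma~\ref{depmax}(1), and the two depth statements $(ii)$, $(iii)$ by combining one short exact sequence with the inequality $\depth\le\p$ from Lemma~\ref{depmax}(2). The first thing I would establish is a complete dictionary of the maximal independent sets of $G^S$. Writing $v=x_{n+1}$ for the new vertex, whose $G^S$-neighborhood is $V(G)\setminus S$, one checks that an independent set of $G^S$ either avoids $v$, in which case it is an independent set of $G$, or contains $v$, in which case it has the form $\{v\}\cup T$ with $T\subseteq S$. Since $S$ is independent, any proper $T\subsetneq S$ can be enlarged by a vertex of $S$, so the \emph{only} maximal independent set containing $v$ is $\{v\}\cup S$, of size $|S|+1$; and a maximal independent set $M$ of $G$ stays maximal in $G^S$ exactly when $v$ cannot be adjoined, that is, when $M\cap(V(G)\setminus S)\neq\emptyset$, i.e. $M\not\subseteq S$. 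Hence
\[
\d(G^S)=\max\bigl(\,|S|+1,\ \max\{|M| : M \text{ maximal independent in } G,\ M\not\subseteq S\}\,\bigr),
\]
and the same formula with $\min$ in place of both maxima computes $\p(G^S)$.

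From this, $(i)$ is immediate: if $|S|\le\dim G-1=\d(G)-1$ then $|S|+1\le\d(G)$, while a maximal independent set $M_0$ of $G$ with $|M_0|=\d(G)$ satisfies $M_0\not\subseteq S$ (as $|M_0|>|S|$) and therefore survives in $G^S$; so $\d(G^S)=\d(G)$, and Lemma~\ref{depmax}(1) finishes the case.

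For the depth statements I would use the vertex-splitting short exact sequence
\[
0 \longrightarrow \frac{R'}{(I(G^S):x_{n+1})}(-1) \xrightarrow{\ \cdot x_{n+1}\ } \frac{R'}{I(G^S)} \longrightarrow \frac{R'}{(I(G^S),x_{n+1})} \longrightarrow 0 .
\]
The right-hand term equals $R/I(G)$, of depth $\depth G$. For the left-hand term, the colon formula for monomial ideals gives $(I(G^S):x_{n+1})=I(G)+(x_w : w\in V(G)\setminus S)$; killing those variables annihilates every generator of $I(G)$, since an edge inside $S$ would contradict independence of $S$. Thus the left-hand module is a polynomial ring in the $|S|+1$ variables $\{x_w : w\in S\}\cup\{x_{n+1}\}$, of depth $|S|+1$. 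The depth inequality $\depth B\ge\min\{\depth A,\depth C\}$ for $0\to A\to B\to C\to 0$ then yields $\depth(R'/I(G^S))\ge\min\{|S|+1,\ \depth G\}$, which is $\depth G$ under the hypothesis $|S|=\depth G-1$ of $(ii)$, and is $1$ in case $(iii)$ once one observes $\depth G\ge 1$ (the graded maximal ideal is never associated to $R/I(G)$, as $I(G)$ is generated in degree $2$).

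The matching upper bounds come from the first step together with Lemma~\ref{depmax}(2): the displayed formula gives $\p(G^S)\le|S|+1$ in all cases, so $\depth(R'/I(G^S))\le\p(G^S)\le|S|+1$, which is $\depth G$ for $(ii)$ and $1$ for $(iii)$; combined with the lower bounds this produces the two equalities. The step I expect to demand the most care is the precise identification of the maximal independent sets of $G^S$, because both the dimension equality and the depth upper bounds rest on the two facts that $\{v\}\cup S$ is the unique maximal independent set meeting $v$ and that a maximal independent set of $G$ persists in $G^S$ exactly when it is not contained in $S$. Once this combinatorial dictionary is in place, the ring-theoretic arguments are routine applications of the depth lemma and the bound $\depth\le\p$.
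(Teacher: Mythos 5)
The paper does not prove this lemma at all; it is imported by citation from \cite[Lemma 1.2]{HKMVT}, so there is no in-paper argument to compare against. Your proof is a correct, self-contained substitute. The combinatorial dictionary is right: since the new vertex $v=x_{n+1}$ is adjacent exactly to $V(G)\setminus S$, the maximal independent sets of $G^S$ are $S\cup\{v\}$ together with those maximal independent sets $M$ of $G$ with $M\not\subseteq S$, which gives $(i)$ via $\dim = \d$ and gives the upper bound $\depth(R'/I(G^S))\le \p(G^S)\le |S|+1$ via Lemma~\ref{depmax}(2). The colon computation $(I(G^S):x_{n+1})=(x_w : w\in V(G)\setminus S)$ is correct (independence of $S$ is exactly what makes $I(G)$ absorb into that prime), so the left term of your exact sequence is a polynomial ring of depth $|S|+1$, and the depth lemma yields the matching lower bounds. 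The only point stated too tersely is $\depth G\ge 1$ in case $(iii)$: "generated in degree $2$" is not by itself the reason; the clean justification is that $I(G)$ is squarefree, hence $\Ass(R/I(G))=\Min(I(G))$ consists of minimal vertex covers, none of which is all of $[n]$, so $\mm\notin\Ass(R/I(G))$. With that patch the argument is complete.
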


Finally we introduce Betti splitting and prove a lemma of regularities of edge ideals by using. Denote $\Gamma(I)$ as the minimal system of monomial generators of $I$. 

\begin{definition}\cite[Definition 1.1]{CHA}
	Let $I, I',$ and $I''$ be monomial ideals such that $\G(I)$ is the disjoint union of $\G(I')$ and $\G(I'')$.
	Then $I = I'+I''$ is a \textit{Betti splitting} if 
	\[\beta_{i,j}(I) = \beta_{i,j}(I')+\beta_{i,j}(I'')+\beta_{i-1,j}(I' \cap I'') \hspace{.5cm}~~\mbox{for all $i\in \NN$ and degrees $j$, }\]
	where $\beta_{i,j}(I)$ denotes the $\{i, j\}$-th graded Betti number of $I$.
\end{definition}

\begin{proposition} \label{reg-pd}\cite[Corollary 2.2]{CHA}
	Let $I = I'+I''$ be a Betti splitting.  Then
	\begin{enumerate}
		\item[(a)] $\reg(I) = \max\{\reg I',\reg I'',\reg I' \cap I'' -1\}$, and
		\item[(b)] $\operatorname{pd}(I) = \max\{\operatorname{pd}(I'),\operatorname{pd}(I''),
		\operatorname{pd}(I'\cap I'')+1\}$,
	\end{enumerate}
\end{proposition}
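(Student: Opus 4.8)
The plan is to reduce both statements to the defining formulas
\[
\reg(M) = \max\{\,j - i \mid \beta_{i,j}(M) \neq 0\,\}, \qquad \pd(M) = \max\{\,i \mid \beta_{i,j}(M) \neq 0 \text{ for some } j\,\},
\]
and then read off the answer directly from the Betti splitting identity
\[
\beta_{i,j}(I) = \beta_{i,j}(I') + \beta_{i,j}(I'') + \beta_{i-1,j}(I' \cap I'').
\]
The crucial observation that makes everything work is that all graded Betti numbers are nonnegative integers, so there is \emph{no cancellation} on the right-hand side. Consequently, for a fixed pair $(i,j)$ we have $\beta_{i,j}(I) \neq 0$ if and only if at least one of the three summands is nonzero; equivalently, the support $\{(i,j) : \beta_{i,j}(I) \neq 0\}$ is exactly the union of the supports of the three terms.

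For part (a), I would compute $\reg(I)$ by maximizing $j - i$ over this union of supports. The contribution of $\beta_{i,j}(I')$ yields $\reg(I')$ and that of $\beta_{i,j}(I'')$ yields $\reg(I'')$ directly. For the third term, $\beta_{i-1,j}(I' \cap I'')$ contributes at homological position $i$ precisely when $\beta_{i-1,j}(I' \cap I'') \neq 0$; writing $k = i - 1$, the quantity $j - i$ equals $(j - k) - 1$, and maximizing over all $k$ with $\beta_{k,j}(I' \cap I'') \neq 0$ produces $\reg(I' \cap I'') - 1$. Taking the overall maximum gives exactly $\reg(I) = \max\{\reg I', \reg I'', \reg(I' \cap I'') - 1\}$.

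Part (b) is the same bookkeeping with $\max\{i \mid \beta_{i,j} \neq 0\}$ in place of $\max\{j - i\}$. The first two terms contribute $\pd(I')$ and $\pd(I'')$. For the shifted term, the homological index $i$ at which $\beta_{i-1,j}(I' \cap I'')$ can be nonzero is largest when $i - 1 = \pd(I' \cap I'')$, i.e.\ $i = \pd(I' \cap I'') + 1$, giving the claimed formula. The only point requiring care---and it is genuinely the only place the argument could go wrong---is the index shift in the intersection term: one must track that $\beta_{i-1,j}(I' \cap I'')$ sits in homological degree $i$, not $i-1$, of the graded Betti table of $I$, so that both the $+1$ in the projective dimension formula and the $-1$ in the regularity formula arise from this single reindexing. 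Beyond this, the proof is a direct consequence of nonnegativity of Betti numbers together with the splitting identity, with no further input needed.
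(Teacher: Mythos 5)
Your argument is correct: since a Betti splitting gives $\beta_{i,j}(I) = \beta_{i,j}(I') + \beta_{i,j}(I'') + \beta_{i-1,j}(I'\cap I'')$ with all terms nonnegative, the support of the Betti table of $I$ is the union of the supports of the three (suitably shifted) pieces, and reading off $\max\{j-i\}$ and $\max\{i\}$ gives exactly (a) and (b). The paper states this proposition as a citation of \cite[Corollary 2.2]{CHA} without reproducing a proof, and your argument is essentially the standard one given in that reference, so there is nothing further to compare.
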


\begin{definition} \label{d.onevar}\cite[Definition 2.6]{CHA}
	Let $I$ be a monomial ideal in $R=k[x_1,\dots,x_n]$. Let $I'$ be the ideal generated by all 
	elements of $\G(I)$ divisible by $x_i$, and let $I''$ be the ideal generated by all other 
	elements of $\G(I)$. We call $I=I'+I''$ an \textit{$x_i$-partition} of $I$. If $I=I'+I''$ 
	is also a Betti splitting, we call $I=I'+I''$ an \textit{$x_i$-splitting}.
\end{definition}

\begin{proposition} \label{bettispli}\cite[Corollary 2.7]{CHA}
	Let $I=I'+I''$ be an $x_i$-partition of $I$ in which all elements of $I'$ are divisible by $x_i$. 
	If $\beta_{i,j}(I' \cap I'') > 0$ implies that $\beta_{i,j}(I')=0$ for all $i$ and multidegrees $j$, 
	then $I=I'+I''$ is a Betti splitting. In particular, if the minimal graded free resolution of $I'$ is 
	linear, then $I=I'+I''$ is a Betti splitting. 
\end{proposition}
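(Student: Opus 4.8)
The plan is to realize the $x_i$-partition through the Mayer--Vietoris short exact sequence $0 \to I'\cap I'' \to I'\oplus I'' \to I \to 0$ and to show, under the stated hypothesis, that every connecting map in the induced long exact $\Tor$-sequence vanishes in each multidegree. Two structural facts will drive the argument. First, since every element of $\G(I')$ is divisible by $x_i$ we have $I'\subseteq(x_i)$, so (as $I'\cap I''\subseteq I'$) every minimal generator of $I'\cap I''$ is divisible by $x_i$, whereas by the definition of the $x_i$-partition no element of $\G(I'')$ is. Second, for a monomial ideal $J$ one has $\beta_{i,\sigma}(J)\neq 0$ only when $\sigma$ is a least common multiple of a subset of $\G(J)$, so that $\deg_{x_i}\sigma\le\max_{m\in\G(J)}\deg_{x_i}m$. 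Combining the two, $\beta_{i,\sigma}(I'\cap I'')\neq 0$ forces $x_i\mid\sigma$, while $\beta_{i,\sigma}(I'')\neq 0$ forces $x_i\nmid\sigma$.

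Applying $-\otimes_R k$ to the short exact sequence above and extracting the multidegree-$\sigma$ strand of the long exact sequence of $\Tor$ gives
\[
\cdots\to\Tor_i(I'\cap I'',k)_\sigma\xrightarrow{\phi_i}\Tor_i(I',k)_\sigma\oplus\Tor_i(I'',k)_\sigma\to\Tor_i(I,k)_\sigma\to\Tor_{i-1}(I'\cap I'',k)_\sigma\to\cdots,
\]
where $\phi_i$ is induced by the two inclusions $I'\cap I''\hookrightarrow I'$ and $I'\cap I''\hookrightarrow I''$. I would argue that it suffices to prove $\phi_i=0$ for all $i$ and all $\sigma$: in that case the sequence splits into short exact pieces
\[
0\to\Tor_i(I',k)_\sigma\oplus\Tor_i(I'',k)_\sigma\to\Tor_i(I,k)_\sigma\to\Tor_{i-1}(I'\cap I'',k)_\sigma\to 0,
\]
and comparing $K$-dimensions yields $\beta_{i,\sigma}(I)=\beta_{i,\sigma}(I')+\beta_{i,\sigma}(I'')+\beta_{i-1,\sigma}(I'\cap I'')$, which is the multigraded Betti splitting identity; summing over the multidegrees of a fixed total degree recovers the graded identity. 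Thus $I=I'+I''$ is a Betti splitting once $\phi_i\equiv 0$.

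To prove $\phi_i=0$ I treat its two components in each multidegree $\sigma$ separately. For the component into $\Tor_i(I'',k)_\sigma$: if $x_i\mid\sigma$ then $\Tor_i(I'',k)_\sigma=0$, and if $x_i\nmid\sigma$ then $\Tor_i(I'\cap I'',k)_\sigma=0$, so this component vanishes regardless. For the component into $\Tor_i(I',k)_\sigma$: the hypothesis says $\beta_{i,\sigma}(I'\cap I'')>0$ implies $\beta_{i,\sigma}(I')=0$, so in every multidegree either the source or the target is zero. Hence $\phi_i=0$, completing the proof that $I=I'+I''$ is a Betti splitting.

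For the final assertion I would verify the hypothesis when the minimal free resolution of $I'$ is $d$-linear. Then $I'$ is generated in the single degree $d$ and $\beta_{i,\sigma}(I')\neq 0$ only if $|\sigma|=d+i$. Every minimal generator of $I'\cap I''$ is of the form $\lcm(u,v)$ with $u\in\G(I')$ and $v\in\G(I'')$; since $u$ and $v$ are distinct minimal generators of $I$ with $x_i\mid u$ but $x_i\nmid v$, we have $v\nmid u$, whence $\deg\lcm(u,v)\ge d+1$. So $I'\cap I''$ is generated in degrees $\ge d+1$, and minimality of its resolution forces $\beta_{i,\sigma}(I'\cap I'')\neq 0\Rightarrow|\sigma|\ge d+1+i>d+i$; therefore $\beta_{i,\sigma}(I')=0$ and the hypothesis holds. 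The step demanding the most care is the reduction to $\phi_i=0$ together with the multidegree bookkeeping for the second component: one must use that the inclusion-induced $\Tor$-maps are multigraded and that the $x_i$-supports of $\Tor_\bullet(I'',k)$ and $\Tor_\bullet(I'\cap I'',k)$ are disjoint. The remaining content is a routine dimension count in the long exact sequence.
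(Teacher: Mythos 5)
Your argument is correct, and the paper itself gives no proof here (Proposition \ref{bettispli} is quoted from \cite[Corollary 2.7]{CHA}); what you have written is essentially the original argument of Francisco--H\`a--Van Tuyl, which likewise runs the long exact $\Tor$-sequence of $0 \to I'\cap I'' \to I'\oplus I'' \to I \to 0$ and kills the inclusion-induced maps by the disjointness of $x_i$-supports together with the Betti-number hypothesis. The verification that a linear resolution of $I'$ forces the hypothesis (via $\deg\lcm(u,v)\ge d+1$ and minimality of the resolution) is also the standard one, so nothing further is needed.
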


The following is a key for the proof of Theorem \ref{main}. 

\begin{lemma}\label{twin}
	Let $G$ be a graph and $v$ a vertex. Define $G'$ as follows. 
	\begin{itemize}
		\item $V(G') = V(G) \cup \{v'\}. $
		\item $E(G') = E(G) \cup \{\{v',w\} \ | \ w \in N(v)\}. $
	\end{itemize}
	Then $\reg(G) = \reg(G'). $
\end{lemma}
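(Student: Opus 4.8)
The plan is to realize $G'$ through a Betti splitting of $I(G')$, reduce the whole statement to a single regularity inequality, and then handle one boundary case by passing to the independence complex. Throughout I write $\reg G=\reg(R/I(G))$ and $\reg I(G)=\reg G+1$, and I set $N=N_G(v)$ and $J=(x_w:w\in N)\subseteq R$. Since $v$ and $v'$ are non-adjacent with $N_{G'}(v')=N$, in $R'=R[x_{v'}]$ we have $I(G')=I(G)+x_{v'}J$. (If $N=\emptyset$ then $J=0$ and $I(G')=I(G)$, so $\reg G'=\reg G$ trivially; assume $N\neq\emptyset$, so $I(G)\neq 0$ and $\reg I(G)\ge 2$.) First I would check this is an $x_{v'}$-splitting: every generator of $x_{v'}J$ is divisible by $x_{v'}$, and $x_{v'}J$ is a degree shift of the ideal $J$ generated by variables, hence has a linear (Koszul) resolution, so Proposition \ref{bettispli} applies. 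Proposition \ref{reg-pd}(a) then gives
\[
\reg I(G')=\max\{\reg I(G),\ \reg(x_{v'}J),\ \reg(x_{v'}J\cap I(G))-1\}.
\]
Here $\reg(x_{v'}J)=2\le\reg I(G)$, and since no generator of $I(G)$ involves $x_{v'}$ one has $x_{v'}J\cap I(G)=x_{v'}(J\cap I(G))$, so the last term equals $\reg(J\cap I(G))$. As $\reg I(G)$ already appears in the maximum, it remains only to prove the key inequality $\reg(J\cap I(G))\le\reg I(G)$: this forces $\reg I(G')=\reg I(G)$, i.e. $\reg G'=\reg G$.

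To attack the key inequality I would use the Mayer--Vietoris sequence
\[
0\to J\cap I(G)\to J\oplus I(G)\to J+I(G)\to 0,
\]
together with the identity $J+I(G)=I(G):x_v=(N)+I(G-N[v])$, from which $\reg(J+I(G))=\reg(G-N[v])+1$, while $\reg(J\oplus I(G))=\reg I(G)$ because $\reg J=1<\reg I(G)$. By the standard monotonicity of regularity under passage to induced subgraphs, $\reg(G-N[v])\le\reg G$. When this is \emph{strict}, $\reg(J+I(G))<\reg I(G)$, the two outer terms have different regularities, and Proposition \ref{regbasic}(1) yields $\reg(J\cap I(G))=\reg I(G)$, closing this case. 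The same monotonicity (applied to $G\subseteq G'$) gives the easy lower bound $\reg G\le\reg G'$, so in this generic situation $\reg G'=\reg G$.

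The main obstacle is the boundary case $\reg(G-N[v])=\reg G$: then $\reg(J+I(G))=\reg(J\oplus I(G))$, and Proposition \ref{regbasic}(3) yields only $\reg(J\cap I(G))\le\reg I(G)+1$, which is too weak, and no purely numerical bookkeeping with Propositions \ref{regbasic} and \ref{reg} seems to decide between $\reg G'$ and $\reg G+1$. To close it I would leave the homological algebra of resolutions and argue on the independence complex $\Delta=\mathrm{Ind}(G)$, whose Stanley--Reisner ideal is $I(G)$, using the reformulation of Hochster's formula $\reg(R/I(G))=1+\max\{k:\tilde H_k(\Delta_W;K)\neq 0\ \text{for some }W\subseteq V(G)\}$. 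In $\Delta'=\mathrm{Ind}(G')$ the vertices $v,v'$ are parallel (false twins). Any induced subcomplex $\Delta'_{W'}$ not containing both $v,v'$ is isomorphic to an induced subcomplex of $\Delta$; and if $W'=W_0\cup\{v,v'\}$ contains both, then $\mathrm{lk}_{\Delta'_{W'}}(v')=\mathrm{Ind}(G[(W_0\cup\{v\})\setminus N])$ is a cone with apex $v$ (isolated there), hence contractible, so deleting $v'$ is a homology isomorphism and $\tilde H_k(\Delta'_{W'})\cong\tilde H_k(\Delta_{W_0\cup\{v\}})$ in every degree $k$.

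Consequently every nonvanishing reduced homology group of an induced subcomplex of $\Delta'$ occurs, in the same homological degree, for an induced subcomplex of $\Delta$, giving $\reg G'\le\reg G$; combined with $\reg G\le\reg G'$ this yields $\reg G'=\reg G$. I note that this homological argument in fact settles all cases uniformly, so one may prefer to run it on its own; I would present the Betti-splitting reduction first because it is closest to the tools just developed and disposes of the generic case at once, with the independence-complex computation reserved for the single obstructing equality case.
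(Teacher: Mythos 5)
Your proposal is correct, but the route diverges from the paper's at the crucial step. Both proofs begin with the same $x_{v'}$-splitting $I(G')=I(G)+x_{v'}J$ via Propositions \ref{bettispli} and \ref{reg-pd}, reducing everything to the bound $\reg(I(G)\cap x_{v'}J)-1\le \reg I(G)$. The paper disposes of this uniformly by a second Betti splitting, $I(G)=I(G-v)+(x_vx_w : w\in N(v))$, which \emph{automatically} yields $\reg T-1\le\reg I(G)$ for $T=I(G-v)\cap(x_vx_w)$, and then shows $\reg T'=\reg T$ for the intersection $T'=I(G)\cap x_{v'}J$ via the exact sequence $0\to (T':x_v)(-1)\to T'\to (T',x_v)\to 0$ together with the identifications $(T':x_v)=(x_{v'}x_w : w\in N(v))$ and $(T',x_v)\cong(T,x_{v'})$ and Proposition \ref{regbasic}; no case distinction is needed. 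You instead bound $\reg(J\cap I(G))$ by Mayer--Vietoris against $J+I(G)=(N)+I(G-N[v])$, which works exactly when $\reg(G-N[v])<\reg G$, and you correctly identify that Proposition \ref{regbasic}(3) is too weak in the boundary case $\reg(G-N[v])=\reg G$; your escape via Hochster's formula --- observing that for a false twin $v'$ the link of $v'$ in any induced subcomplex $\Delta'_{W'}$ containing both $v,v'$ is a cone with apex $v$, so deleting $v'$ preserves all reduced homology --- is sound and in fact proves the lemma in all cases by itself (together with the trivial monotonicity $\reg G\le\reg G'$). What the paper's argument buys is a short, uniform proof entirely inside the splitting machinery already set up; what yours buys is a transparent combinatorial-topological explanation (duplicating a vertex with the same open neighborhood changes no induced-subcomplex homology), which could be presented on its own without the splitting apparatus at all. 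The only cosmetic caution is that the Hochster reformulation should be stated so that the degenerate degrees ($k=-1$, $W=\emptyset$) are handled by convention, but this does not affect the argument since $G$ has an edge.
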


\begin{proof}
	There exist the following exact sequences:  
	
	\[
	0 \to I(G) \cap (v'w \ | \ w \in N(v')) \to I(G) \oplus (v'w \ | \ w \in N(v')) \to I(G') \to 0, 
	\]
	
	\[
	0 \to I(G - v) \cap (vw \ | \ w \in N(v)) \to I(G - v) \oplus (vw \ | \ w \in N(v)) \to I(G) \to 0. 
	\]
	
	Denote $T = I(G - v) \cap (vw \in N(v'))$, $T' = I(G) \cap (v'w \in N(v'))$. 
	By virtue of Proposition \ref{bettispli}, the above $I(G) = I(G - v) + (vw \ | \ w \in N(v))$ and $I(G') = I(G) + (v'w \ | \ w \in N(v'))$ are Betti splitting since 
	$(vw \ | \ w \in N(v)) \cong (v'w \ | \ w \in N(v'))$ has a linear resolution. 
	Therefore $\reg(I(G')) = \max\{\reg(T') - 1, \reg(I(G))\}$, 
	$\reg(I(G)) = \max\{\reg(T) - 1, \reg(I(G - v))\}$ by Proposition \ref{reg-pd}. 
	
	Moreover the following sequence is exact. 
	 
	\[
	0 \to (T' : v)(-1) \xrightarrow{\cdot v} T' \to (T', v) \to 0. 
	\]
	
	Here $(T' : v) = (v'w \ | \ w \in N(v'))$ and $(T', v) = (I(G), v) \cap (v, v'w \ | \ w \in N(v')) \cong (T, v')$. 
	Thus $\reg(T') = \reg(T)$ by Proposition \ref{regbasic}. 
	
	Hence $\reg(I(G')) = \max\{\reg(I(G)), \reg(T) - 1\} = \reg(I(G))$. 
	
\end{proof}

\section{proof of main results}

In this section, we prove Theorem \ref{main} and Corollary \ref{maina}. Prior to the proof, one lemma is introduced.
%\begin{theorem}\label{main}
%	For all $n \ge 3$, the following holds. 
%	\[
%	\Graph_{\dim, \depth, \reg}(n) = C^{**}(n). 
%	\]
%	where
%	\begin{displaymath}
%		\begin{aligned}
%			C^{**}(n) =  & \{(n - 1, 1, 1)\} \cup \\
%			& \{(d, p, r) \in \NN^3 : 1 \le p \le d \le n - 2, 2 \le r + d \le n - 1, \\
%			& 1 \le r \le d \le (n - d - (r - 1))(d - p + 1) + (r - 1). \}(\r \le \d youshoumei)
%		\end{aligned}
%	\end{displaymath}
%\end{theorem}

\begin{lemma}\label{regz}\cite{HKMVT}\cite{HKU}
	For any $3 \le n$ and $(p, d) \in C^{*}(n)$ there is a graph with $n$ vertices such that $G$ satisfying $\dim  G = d, \depth G = p, \reg G  = 1$. 
	
\end{lemma}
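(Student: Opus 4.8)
The plan is to take, for each point of $C^{*}(n)$, one of the graphs already known from \cite{HKMVT} and \cite{HKU} to have the prescribed $\dim G=d$ and $\depth G=p$, and to arrange that in addition $\reg G=1$. The condition $\reg G=1$ says exactly that $I(G)$ has a linear resolution, and the graphs realizing $C^{*}(n)$ are all assembled from a simple seed by iterated $S$-suspension. So the whole lemma reduces to two points: exhibiting a seed with $\reg=1$, and proving that the $S$-suspension operation never raises the regularity above $1$.

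The key local claim I would isolate is: \emph{if $\reg G=1$ and $S$ is an independent set of $G$, then $\reg G^{S}=1$.} To prove it, let $v$ be the apex vertex of $G^{S}$ and apply Proposition \ref{reg} to $G^{S}$ and $v$, giving $\reg G^{S}\in\{\reg R'/(I(G^{S}),v),\ \reg R'/(I(G^{S}):v)+1\}$. Setting $v=0$ removes every generator of $I(G^{S})$ that involves $v$, so $R'/(I(G^{S}),v)\cong R/I(G)$, whose regularity is $1$ by hypothesis. On the other hand $(I(G^{S}):v)=I(G)+(x_{w}:w\in V(G)\setminus S)$; modulo the linear forms $x_{w}$ with $w\notin S$ the surviving generators of $I(G)$ would be edges of $G$ contained in $S$, of which there are none because $S$ is independent. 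Hence $R'/(I(G^{S}):v)$ is a polynomial ring and has regularity $0$. Both alternatives therefore equal $1$, so $\reg G^{S}=1$. Since $S$ is required to be independent in every $S$-suspension, this applies at every stage; it also fits Lemma \ref{susp}, which only ever suspends along independent sets.

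To finish I would run this inductively along the construction. A single edge $K_{2}$ has $\reg=1$, and in fact a complete multipartite graph is itself an $S$-suspension tower over $K_{2}$ (each new vertex of a part is suspended along the independent set formed by the other vertices of that part), hence also has $\reg=1$ by the local claim. The graphs realizing the points of $C^{*}(n)$ in \cite{HKMVT} and \cite{HKU} are built from such a seed by a chain of $S$-suspensions, with $\dim$ and $\depth$ tracked by Lemma \ref{susp} and connectivity preserved at each step; applying the local claim along the chain shows the terminal graph has $\reg=1$. The main obstacle is bookkeeping rather than ideas: one must confirm that the specific graphs of \cite{HKMVT} and \cite{HKU} are genuinely expressible as $S$-suspension towers over a $\reg=1$ seed, in particular that the steps used there to raise the depth up to $p$ while keeping $\dim=d$ (and respecting the numerical constraint $d\le (n-d)(d-p+1)$) are suspensions to which the local claim applies. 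Wherever a step is not literally such a suspension, I would either re-engineer that portion of the construction as an $S$-suspension or verify $\reg=1$ directly by the same colon-and-quotient computation through Proposition \ref{reg}.
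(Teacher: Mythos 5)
First, a point of comparison: the paper offers no internal proof of Lemma \ref{regz} at all --- it is imported from \cite{HKMVT} and \cite{HKU} --- so what must be judged is whether your sketch would actually close the statement. Your local claim, that $\reg G^{S}=1$ whenever $\reg G=1$ and $S$ is independent, is correct and correctly proved: $(I(G^{S}),v)$ collapses to $I(G)$ and $(I(G^{S}):v)$ collapses to an ideal generated by variables, so both alternatives of Proposition \ref{reg} equal $1$. That observation is genuinely in the spirit of the paper (it is a cousin of Lemma \ref{twin}) and would be a useful ingredient.

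The gap is in the seeds, and it is exactly where the content of the lemma lives. Your only explicit seeds are $K_{2}$ and the complete multipartite graphs obtained from it by suspensions; every complete multipartite graph with at least two parts has $\depth=1$ (its quotient ring is a fiber product of polynomial rings over $K$), and Lemma \ref{susp} only ever \emph{preserves} depth (via $|S|=\depth-1$), never raises it. So your tower realizes only the pairs $(d,1)$, whereas $C^{*}(n)$ contains pairs with $p$ as large as permitted by $d\le(n-d)(d-p+1)$. For $p\ge 2$ one must exhibit a seed with $\dim=d$, $\depth=p$ and $\reg=1$, and that existence, governed precisely by the inequality above, \emph{is} the lemma. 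The graphs that realize such pairs (for instance, a clique on $n-d$ vertices with pendant sets of sizes at most $d-p+1$ summing to $d$, as in the chordal constructions of \cite{HKU}) are not $S$-suspension towers over a $\reg=1$ seed: a pendant leaf is adjacent to a single clique vertex, so the complement of its neighborhood is far from independent, and your local claim does not apply. Their regularity must be checked by a different route (e.g.\ $\reg=\im=1$ for chordal graphs, or Fr\"oberg's criterion on the complement). Your closing sentence concedes exactly this (``re-engineer that portion \dots\ or verify $\reg=1$ directly''), but that unperformed verification is the proof, not bookkeeping.
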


%\begin{lemma}
%	Let $G$ be a graph and $G_1, \cdots, G_m$ a connected component of $G$. 
%	If $d_i \le (n_i - d_i - (r_i - 1))(d_i - p_i + 1) + (r_i - 1)$, 
%	then $d \le (n - d - (r - 1))(d - p + 1) + (r - 1)$. 
%\end{lemma}

\begin{proof}[Proof of Theorem \ref{main}]
	
	First, we prove $C^{**}(n) \subset \Graph_{\dim, \depth, \reg}(n)$. 
	
	It is sufficient to construct a graph $G$ with $n$ vertices such that 
	$\dim G = d$, $\depth G = p$ and $\reg G = r$. 
	By Lemma \ref{regz}, we only prove the case of $2 \le r$. 
	\begin{enumerate}
		\item The case $r \le p$. 
		
		We denote $r = 1 + r'$, $p = 1 + r' + a, d = 1 + r' + a + b$ where $a, b$ are some nonnegative integers. 
		
		The inequality $n - d - r \ge 1$ implies $n -2r' - 1 \ge 2 + a + b$.

		Therefore by Lemma \ref{regz}, there is a graph $G'$ with $n - 2r' - 1$ vertices such that 
		$\dim G' = 1 + a + b, \depth G' = 1 + a, \reg G' = 1$. 
		Then $G'$ has an independent set $S$ such that $\lvert S \rvert = a < \depth G'$. 
		
		We define $G$ as follows. 
		\begin{displaymath}
			\begin{aligned}
				V(G) = V(G') &\cup \{x_{1}, y_{1}, \cdots, x_{r'}, y_{r'}\} \cup \{v\}, \\
				E(G) = E(G') &\cup \{\{x_{1,}, y_{1}\}, \cdots, \{x_{r'}, y_{r'}\}\} \\ 
				             & \cup \{\{v, x_{1}\}, \cdots, \{v, x_{r'}\}\} \\ 
				             & \cup \{\{v, w\} : w \in V(G') \setminus S\}.  
			\end{aligned}
		\end{displaymath}
		
		By virtue of Lemma \ref{susp}, it can be ensured $\rvert V(G) \lvert = n, \dim G = 1 + r' + a + b = d$, 
		$\depth G = 1 + r' + a = p$ and $\reg G = 1 + r' = r$. 
		
		\item The case $r > p$. 
		
		Similarly denote $r = 1 + r'$, $p = 1 + a, d = 1 + r' + b$. 
		
		Repeat the above discussion. Since $n - 2r' - 1 \ge 2 + b$, there is a graph $G'$ with $n - 2r' - 1$ vertices such that 
		$\dim G' = 1 + b, \depth G' = 1, \reg G' = 1$ and $G'$ has an independent set $S$ with $\lvert S \rvert = a$. 
		
		We define $G$ as follows: 
		
		\begin{displaymath}
			\begin{aligned}
				V(G) = V(G') &\cup \{x_{1}, y_{1}, \cdots, x_{r'}, y_{r'}\} \cup \{v\}, \\
				E(G) = E(G') & \cup \{\{x_{1,}, y_{1}\}, \cdots, \{x_{r'}, y_{r'}\}\} \\ 
				             & \cup \{\{v, x_{1}\}, \cdots, \{v, x_{r'}\}\} \\ 
				             & \cup \{\{v, y_{1}\}, \cdots, \{v, y_{r' - a}\}\} \\ 
				             & \cup \{\{v, w\} : w \in V(G')\}. 
			\end{aligned}
		\end{displaymath}
		
		Then $\rvert V(G) \lvert = n, \dim G = 1 + r' + a = d$, 
		$\depth G = 1 + b = p$ and $\reg G = 1 + r' = r$. 
	\end{enumerate}
	
	Second we prove $\Graph_{\dim, \depth, \reg}(n) \subset C^{**}(n)$. 
	If $\reg(G) + \dim(G) = n$ then $\reg(G) = \m(G)$ because $n - \d(G) \ge \m(G) \ge \reg(G)$. 
	Theorem \ref{matching} says that if the above condition holds then $G$ is a pentagon or a star graph or a star triangle or a Cameron-Walker graph. Hence we obtain that if $\reg(G) + \dim(G) = n$ then $G$ is a star graph by virtue of Theorem \ref{cwdrdd} and easy calculation. Moreover if $G$ is a star graph with $n$ vertices, then 
	$\dim G = n - 1, \reg G = \depth G = 1$. 
	
	Thus we only prove that $\dim G$, $\depth G$, $\reg G$ and $n = \lvert V(G) \rvert$ satisfy the following inequality in $\dim(G) + \reg(G) \le n - 1$: 
	
	\[
	\dim G \le (n - \dim G - \reg G + 1)(\dim G - \depth G + 1) + (\reg G - 1). 
	\]
	
	By virtue of Lemma \ref{depmax}, it is sufficient to prove this inequality by induction on 
	$\lvert V(G) \rvert$. 
	
	We denote $\r(G)$ as $\reg G$. 
	
	Define $U = \{v \ | \ v \in V(G), \text{ there is a maximal independent set }S\text{ such that }\lvert S \rvert = \d(G), \text{ and }v \in S\}$ and $t = \min\{\lvert N(v) \rvert\ \ | \ v \in U\}$. 
	
	Suppose $v \in U$ and $\lvert N(v) \rvert = t$. If $v'$ is an isolated vertex of $G - v$, then $v'$ is only incident to $v$ and $\{v'\} \cup S \setminus v$ is also an independent set for any independent set $v \in S$. Then $\lvert N(v) \rvert = \lvert N(v') \rvert = 1$, thus $E(G) = \{v, v'\}$ , a contradiction to $n \geq 3$. Thus we may assume that there is no isolated vertex of $G - v$. 
	
	In addition if $v'$ is an isolated vertex of $G - N[v]$, $v'$ has $t$ incident vertices since $N(v') \subset N(v)$ and $\{v'\} \cup S \setminus v$ is also an independent set. Thus $N(v) = N(v')$. 
	
	Denote $m$ as a number of isolated vertices of $G - N[v]$, $G'$ as a graph $G - N[v]$ without isolated vertices and $a = \dim(G) - \dim(G - v) \in \{0, 1\}$. 

	\begin{enumerate}
		\item The case $\r(G) = \r(G - v) + 1$. 
		
		Assume $G - N[v]$ has an isolated vertex $v'$. Then $N(v) = N(v')$ by the above discussion, $\r(G) = \r(G - v)$ by virtue of Lemma \ref{twin}, a contradiction. Therefore $G - N[v]$ has no isolated vertex. 
		
		Then the following holds from $\dim(G) = \dim(G - N[v]) + 1 = \d(G') + 1$, $\r(G) = \r(G') + 1$ and 
		$\p(G) \le \p(G') + 1$ by Lemma \ref{depmax} and Proposition \ref{reg}.  
		\begin{displaymath}
			\begin{aligned}
				&\dim(G) = \d(G) = \d(G') + 1 \\
				&\le ((n - t - 1) - \d(G') - (\r(G') - 1))(\d(G') - \p(G') + 1) + \r(G') - 1 + 1 \\
				&\le (n - \d(G) - \r(G) + 1 + (1 - t))(\d(G) - \p(G) + 1) + \r(G) - 1 - 1 + 1 \\
				&\le (n - \d(G) - \r(G) + 1)(\d(G) - \p(G) + 1) + \r(G) - 1. 
			\end{aligned}
		\end{displaymath}
		
		\item The case $\r(G) = \r(G - v)$ and $\p(G) \le \p(G - v)$.  
		
		The followings holds. 
		\begin{displaymath}
			\begin{aligned}
				&\dim(G) = \d(G) = \d(G - v) + a \\ 
				&\le ((n - 1) - \d(G - v) - \r(G - v) + 1)(\d(G - v) - \p(G - v) + 1)+(\r(G - v) - 1) + a \\ 
				&= (n - \d(G) - \r(G) + 1 + (a - 1))(\d(G) - \p(G - v) + 1 - a) + (\r(G) - 1) + a \\
				&\le (n - \d(G) - \r(G) + 1)(\d(G) - \p(G) + 1) + (\r(G) - 1). 
			\end{aligned}
		\end{displaymath}
		
		\item The case  $\r(G) = \r(G - v)$ and $\p(G) = \p(G - v) + 1$. 
		
		Let $S \subset V(G)$ be an independent set of $G$ such that $\lvert S \rvert = \p(G)$ and 
		$v_1, \dots, v_m$ be isolated vertices of $G - N[v]$. 
		The condition $\p(G) = \p(G - v) + 1$ implies that $S \setminus \{v\}$ is a maximal independent set of $G - N[v]$ such that $\lvert S \setminus \{v\} \rvert = \p(G - v) = \p(G - N[v])$ and $v_i \in S$ for any $1 \le i \le m$ since $S$ is maximal. 
		Therefore $S \setminus \{v, v_1, \dots, v_m\} = S'$ is a maximal independent set of $G'$ and 
		$\lvert S' \rvert = \p(G') = \p(G) - m - 1$. 
		 
		By Lemma \ref{depmax}, $\p(G - \{v, v_1, \dots, v_m\}) \le \p(G') + t$ holds. Therefore 
		$\p(G) = \p(G - v) + 1 = \p(G') + m + 1 \le \p(G - \{v, v_1, \dots, v_m\}) \le \p(G') + t$. 
		
		Moreover by Proposition \ref{reg}, $\r(G - \{v, v_1, \dots, v_m\}) \le \r(G') + t$ and 
		$\r(G) = \r(G - \{v_1, \dots, v_m\}) \in \{\r(G') + 1, \r(G - \{v, v_1, \dots, v_m\})\}$. 
		Thus $\r(G) \le \r(G') + t$. 
		
		Then the following holds. 
		
		\begin{displaymath}
			\begin{aligned}
				&\dim(G) = \d(G) = \d(G') + m + 1 \\ 
				& \le ((n - t - 1 - m) - (\d(G')) - (\r(G') - 1))(\d(G') - \p(G') + 1) + (\r(G') - 1) + m + 1\\ 
				&= (n - \d(G) - (\r(G') + t) + 1)(\d(G) - \p(G) + 1) + (\r(G') + t) - 1 + (m + 1 - t)\\
			    &\le (n - \d(G) - \r(G) + 1)(\d(G) - \p(G) + 1) + \r(G) - 1 + (m + 1 - t) \\
			    &\le (n - \d(G) - \r(G) + 1)(\d(G) - \p(G) + 1) + \r(G) - 1. 
			\end{aligned}
		\end{displaymath}
	\end{enumerate}
\end{proof}

%\begin{corollary}
%	For all $n \ge 3$, the following holds: 
%	\[
%	\Graph_{\dim, \depth}(n) = C^{*}(n). 
%	\]
%\end{corollary}

\begin{proof}[Proof of Corollary \ref{main}]
	
	Define $C^{**}(n, c) = \{(d, p) \in \NN^2 : 1 \le p \le d \le n - 1, 
	d \le (n - d - (c - 1))(d - p + 1) + (c - 1)\}$
	
	For any $n \geq 3$ and $ \lfloor \frac{n - 1}{2} \rfloor \ge .... \geq i \geq 2$, the inclusion $C^{**}(n,i) \subset C^{**}(n,i-1)$ is straightforward. Thus: 
	 
	\[
	C^{**}(n, \lfloor \frac{n - 1}{2} \rfloor) \subset C^{**}(n, \lfloor \frac{n - 1}{2} \rfloor - 1) \subset \cdots \subset C^{**}(n, 2) \subset C^{**}(n, 1) = C^{*}(n). 
	\]
	
	By definition of $\Graph_{\dim, \depth, \reg}(n)$, the following is obtained:
	\[
	\Graph_{\dim, \depth}(n) = \bigcup_{i = 1}^{\lfloor \frac{n - 1}{2} \rfloor} C^{**}(n, i) = C^{**}(n, 1) = C^*(n). 
	\]
	
\end{proof}

%\section{corollary and others}
%In this section we 
%\begin{corollary}
%	For any $n \ge 7$, the following holds. 
%	\[
%	\Graph^{C-W}_{\dim, \depth, \reg} \subsetneq \Graph_{\dim, \depth, \reg}. 
%	\]
%\end{corollary}
%
%\begin{proof}
%	There is no Cameron-Walker graph $G$ with $\depth G = \reg G = 2$ and $\dim G = n - 4$.   
%\end{proof}
%
%\begin{theorem}
%	$\Graph^{chordal}_{\dim, \depth, \reg} = \Graph_{\dim, \depth, \reg}$. 
%\end{theorem}
%
%\begin{problem}
%	Characterize the class of graph such that $\Graph^{class}_{\dim, \depth, \reg} = \Graph_{\dim, \depth, \reg}$ for any $n \ge 3$. 
%\end{problem}
%
%\begin{problem}
%	$\Graph_{\dim, \depth, \reg, \im}$
%\end{problem}
%
%\begin{problem}
%	$\Graph_{\dim, \depth, \reg, \deg}$
%\end{problem}

%\begin{construction}
%	\begin{displaymath}
%		\begin{aligned}
%			\SimpComp_{\dim, \depth}(n) = & \{(d, p) \in \NN^2 : \\
%			& \text{There is a simplicial complex }\Delta \text{ such that }
%			\dim S[\Delta] = d, \depth S[\Delta]= p. \}
%		\end{aligned}
%	\end{displaymath}
%\end{construction}

%\begin{theorem}
%	\[
%	\SimpComp_{\dim, \depth}(n) = \{(d, p) \in \NN^2 : 1 \le p \le d \le n - 1. \}
%	\]
%\end{theorem}
%
%\begin{problem}
%	Description the following set. 
%	\[
%	\SimpComp_{\dim, \depth, \reg}(n) = \{(d, p, r) \in \NN^3\}
%	\]
%\end{problem}
%%%%%%%%%%%%%%%%%%%%%%%%%%%%%%%%%%%%%%%%%%%%%%%%%%%%%%%%%%%%%%%%

\bibliographystyle{plain}

\begin{center}
	{\bf Acknowledgments} 
\end{center}

The author was supported by Grant-in-Aid for JSPS Research Fellow JP21J21603. 
	
\end{document}